\title[Jacobi ensembles]{Edge scaling of the $\beta$-Jacobi ensemble}
\author{Diane Holcomb, Gregorio R. Moreno Flores}
\address{University of Wisconsin-Madison, Department of Mathematics, 480 Lincoln Dr., Madison WI 53706}
\email{holcomb@math.wisc.edu, moreno@math.wisc.edu}
    \newtheorem{theorem}{Theorem}
    \newtheorem{lemma}[theorem]{Lemma}
    \newtheorem{proposition}[theorem]{Proposition}
    \newtheorem{claim}[theorem]{Claim}
\theoremstyle{definition} 
    \newtheorem{remark}[theorem]{Remark}
\newcommand{\lstar}{{\raise-0.15ex\hbox{$\scriptstyle \ast$}}}
\newcommand{\var}{\text{Var }}
\theoremstyle{remark} 
\newcommand{\Balpha}{\underline{\alpha}}
\newcommand{\Btheta}{\underline{\theta}}
\newcommand{\Blambda}{\underline{\lambda}}
\newcommand{\Bq}{\underline{q}}
\newcommand{\Bx}{\underline{x}}
\newcommand{\By}{\underline{y}}
\newcommand{\Ba}{\underline{a}}
\newcommand{\Bb}{\underline{b}}
\newcommand{\rr}{\mathbb{R}}
\newcommand{\ind}{{\bf 1}}
\begin{document}

\begin{abstract}
 We study the scaling limit of the spectrum of the $\beta$-Jacobi ensemble at the soft-edge and hard-edge for general values of $\beta$. We show that the limiting point processes correspond respectively to the stochastic Airy and Bessel point processes introduced in \cite{RRV} and \cite{RR}.
\end{abstract}

\maketitle

\section{Introduction}

Random matrices are well known in the mathematical community as a model coming from nuclear physics, although they were first introduced in statistics. The model considered by Wishart \cite{Wish} (now known as {\it Wishart matrices}) can be described as follows: let $\mathcal{M}_{n \times m}$ denote the space of $n \times m$ matrices with complex (resp. real) valued entries. Take a matrix $X \in \mathcal{M}_{n\times m}$ with independent complex (resp. real) Gaussian entries, then $XX^T \in \mathcal{M}_{n\times n}$  is said to have a $(n,m)$-Wishart distribution. It corresponds to the sample correlation matrix for a sample drawn from a multivariate complex (resp. real) normal distribution.

However the most famous model of random matrices is probably the GUE ensemble: let $\{ \xi_{l,j},\, \eta_{l,j}:\, 1\leq l\leq j\leq N \}$ be an array of independent standard normal random variables. The diagonal entries of a GUE matrix correspond to $X_{l,l}=\xi_{l,l}$ while the off-diagonal entries are defined as $X_{l,j}=2^{-\frac12}(\xi_{l,j}+i \eta_{l,j})$ and $X_{l,i}=2^{-\frac12}(\xi_{l,j}-i \eta_{l,j})$, for $l<j$.  This model was introduced by Wigner as a toy model for the spectrum of heavy atoms, by observing that the spacings between consecutive eigenvalues of a GUE random matrix mimic the spacing between different energy levels of these atoms (see \cite{M} and references therin).

In both models, when the size of the matrix grows, the properly rescaled empirical spectral measure converges to a compactly supported distribution called the semi-circular law in the case of the GUE and the Marchenko-Pastur law in the case of the Wishart model. In this last case, the limit law depends on the ratio of the parameters $m$ and $n$.

The compact support of the limiting empirical laws suggests that an interesting phenomenon should occur near to the edge of the spectrum. This was first studied in the GUE case where it was discovered that the law of the bottom of the spectrum, after centering and rescaling, converges to a point process (the stochastic Airy process). In particular, the law of the properly centered and scaled smallest eigenvalue converges the GUE Tracy-Widom distribution \cite{TW1}. 

Substantial differences appear between the GUE and the Wishart ensemble. In the later case, for certain values of the parameters, the lower edge of the spectrum will converge to $0$ which is the leftmost possible value for the eigenvalues of a positive definite matrix. This in turn suggests that the spectrum should converge to a different limiting point process supported on the positive half-line, the Bessel process \cite{TW2}. This extreme situation is known as a hard-edge, in opposition to the soft-edge situation described in the previous paragraph. While the Wishart ensemble can lead to a soft-edge or a hard-edge depending on the parameters, the GUE only leads to a soft-edge. One can, of course, also consider the upper edge of the spectrum which gives a soft-edge in both cases.

A straightforward generalization of the GUE and the Wishart ensemble leads to the $\beta$-Hermite and $\beta$-Laguerre ensembles. The joint density of these point processes are given respectively by: 
\begin{eqnarray*}
 f^{Hermite}&=& \frac{1}{Z_{\beta,n}} \prod_{i=1}^n e^{- \frac{\beta}{2} (\lambda_i^2/2)} \prod_{j<k} |\lambda_j-\lambda_k|^\beta\\
 f^{Laguerre}&=& \frac{1}{Z_{\beta,n,m}}\prod_{i=1}^n \lambda_i^{\frac{\beta}{2}(m-n+1)-1} e^{-\frac{\beta}{2} \lambda_i} \prod_{j<k} |\lambda_j-\lambda_k|^\beta
\end{eqnarray*}
The $\beta$-Hermite ensembles include the classical random matrix ensembles, such as the GUE (for $\beta=2$), the GOE (for $\beta=1$), which is the real analogue of the GUE, and the GSE (for $\beta=4$), the symplectic analogue of the GUE. The $\beta$-Laguerre ensembles correspond to the complex Wishart model when $\beta=2$. $\beta=1$ and $4$ correspond respectively to real and quaternion valued analogues.

In both settings, the cases $\beta=1,\, 2$ and $4$ have the particularity to be solvable and can be studied by means of asymptotics of orthogonal polynomials \cite{M}. In the general $\beta$ case, the solvability is lost, together with a natural interpretation in term of classical random matrix ensembles. The soft and hard edge limits for these general ensembles were considered in \cite{RRV} and \cite{RR} respectively, leading to the stochastic Airy process and the stochastic Bessel process, respectively, in the soft-edge and the hard-edge case. This approach was anticipated in \cite{ES}, where tridiagonal matrix models associated to the $\beta$-Laguerre and Hermite ensemble \cite{DE} were conjecture to converge to continuum random operators. It is worth noting that the link between these random operators and the classical results involving the Tracy-Widom distribution remains obscur. To the best of our knowledge, the only result in this direction is \cite{BV} where a spiked-random matrix model is investigated.

\noindent The scaling limit of eigenvalues in the bulk of the spectrum for the $\beta$-ensembles is studied in \cite{VV} and \cite{JV}. We will not address this type of questions here.

\vspace{3ex}  

The object of study in this paper is an ``eigenvalue'' ensemble derived from the following class of matrices.  We take an $n_1$ by $n$ matrix $M$ with each entry independently drawn from the standard normal distribution, the matrix $X=M^TM$ is one of the $n$ by $n$ Hermitian matrix models introduced by Wishart.  If we assume $n_1\geq n$ we know the resulting matrix is almost surely invertible and so it is reasonable to consider $A=X^{1/2}(X+Y)^{-1}X^{1/2}$ where $Y$ is constructed in the same manner with parameters $n_2, n$.  Consideration of this type of matrix first arose in statistics (MANOVA, or multivariate analysis of variance, uses the base matrix model to study the interdependence of several dependent and independent variables). The resulting matrix $A$ is again Hermitian and joint density function of its eigenvalues is given by
$$f_{\beta,n,n_1,n_2}(\lambda_1,\dots,\lambda_n)=\frac{1}{Z_{\beta,n,n_1,n_2}}\prod_{i=1}^n \lambda_i^a (1-\lambda_i)^b \prod_{j<k} |\lambda_j-\lambda_k|^\beta\, {\bf 1}_{\{\lambda_i \in [0,1],\, \forall \, i\}}$$
where $a=\frac\beta 2 (n_1-n+1)-1, b=\frac \beta 2 (n_2-n+1)-1$, $Z$ is a constant dependent on the parameters, $n_1, n_2\ge n$ and $\beta=1$.
If instead of drawing from the standard normal distribution we had chosen a complex normal distribution, we would have had the same joint density function with $\beta=2$. These cases were extensively studied by Johnstone in \cite{J}.

Note that the eigenvalues lie inside the interval $[0,1]$, unlike the Hermite or Laguerre cases where the spectrum is unbounded. The limiting spectral density is given by the following theorem:

\begin{theorem}\cite{CC} Let $\mu_{n}= \frac{1}{n} \sum_i \delta_{\lambda_i}$ with $\{\lambda_i\}$ the eigenvalues of $J(n,n_1,n_2,\beta)$, $n_1,n_2>n$, $n_1/n\to \gamma_1$, and 
 $n_2/n\to \gamma_2$, then $\mu_n(x)$ converges weakly to $\rho(x)$, where
\[
  \rho(x) = \frac{2\pi}{\gamma_1+\gamma_2} \cdot  \frac{\sqrt{(\Lambda_+ - x)(x- \Lambda_-)}}{x(1-x)}.
\]
Here  $\Lambda_{\pm}$ denotes the upper and lower edges of the spectrum which are given by
\begin{eqnarray}\label{formula-edges}
\Lambda_\pm= \left( \frac{ \sqrt{\gamma_1(\gamma_1+\gamma_2-1)}}{\gamma_1+\gamma_2}\pm \frac{\sqrt{\gamma_2 }}{\gamma_1+\gamma_2}\right)^2.
\end{eqnarray}

\end{theorem}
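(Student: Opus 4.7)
The plan is to identify $\rho$ as the equilibrium measure of a logarithmic Coulomb gas on $[0,1]$ and recover it via its Stieltjes transform. Writing the joint density in Gibbs form,
$$f_{\beta,n,n_1,n_2}(\Blambda)\propto \exp\!\left(-\frac{\beta n^2}{2}\left[\int V_n(x)\,d\mu_n(x) - \frac{1}{n^2}\sum_{j\ne k}\log|\lambda_j-\lambda_k|\right]\right),$$
where $V_n(x)=-\frac{2a}{\beta n}\log x - \frac{2b}{\beta n}\log(1-x)\to V(x):=-(\gamma_1-1)\log x-(\gamma_2-1)\log(1-x)$, one recognizes a log-gas at effective inverse temperature of order $n^2$ in the confining external field $V$. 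First I would establish a large deviation principle for $\mu_n$ in the Ben Arous--Guionnet framework (adapted to a compact interval with a logarithmically singular potential, as is standard for $\beta$-Jacobi ensembles), with good rate function
$$I(\mu)=\int V\,d\mu - \iint \log|x-y|\,d\mu(x)\,d\mu(y).$$
Strict convexity of the logarithmic energy on signed measures of zero mass guarantees a unique minimizer $\rho$, to which $\mu_n$ converges weakly.

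To compute $\rho$ explicitly, I would invoke the Euler--Lagrange conditions: on $\text{supp}(\rho)$, $V(x)-2\int \log|x-y|\,d\rho(y)=\ell$ for some Lagrange multiplier $\ell$, with $\ge$ off the support. Differentiating yields the singular integral equation
$$2\,\text{PV}\!\int\frac{d\rho(y)}{x-y}=-\frac{\gamma_1-1}{x}+\frac{\gamma_2-1}{1-x}.$$
Introducing the Stieltjes transform $G(z)=\int(z-y)^{-1}\,d\rho(y)$ and invoking Plemelj, this equation together with the normalization $G(z)\sim 1/z$ at infinity translates into a quadratic equation $G(z)^2+p(z)G(z)+q(z)=0$ with rational coefficients $p,q$ determined by $V$. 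Selecting the branch with the correct decay gives $G(z)=\tfrac12\bigl(-p(z)+\sqrt{p(z)^2-4q(z)}\,\bigr)$, and the radicand must factor as $\frac{C(\Lambda_+-z)(z-\Lambda_-)}{z^2(1-z)^2}$ with $\Lambda_\pm$ pinned down by the requirement that the apparent singularities of $G$ at $z=0$ and $z=1$ cancel after extraction of the square root. Recovering $\rho(x)=-\pi^{-1}\lim_{\eps\downarrow 0}\Im G(x+i\eps)$ then produces the formula in the theorem.

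The main obstacle is the algebraic step of extracting $\Lambda_\pm$: the no-pole conditions at $z=0$ and $z=1$ must reduce to a pair of polynomial relations whose solution is precisely the symmetric expression (\ref{formula-edges}). The computation is routine in principle, but the careful bookkeeping of signs, the choice of branch of the square root, and the overall normalization constant are what make the identification of the density nontrivial. Once this algebra is in hand, combining the LDP with the explicit form of $G$ yields both the weak convergence of $\mu_n$ and the explicit formula for the limit density.
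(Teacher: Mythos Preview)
Your approach is sound and is indeed a standard route to the limiting spectral density of a $\beta$-ensemble: rewrite the joint density as a log-gas, invoke the Ben Arous--Guionnet large deviation principle (or its adaptation to compact intervals with logarithmic endpoint singularities, as done for instance by Hiai--Petz), and identify the equilibrium measure via the Euler--Lagrange equations and the resulting quadratic equation for the Stieltjes transform. The identification $V(x)=-(\gamma_1-1)\log x-(\gamma_2-1)\log(1-x)$ is correct, and the algebra you outline for pinning down $\Lambda_\pm$ does go through; the cancellation of poles at $z=0$ and $z=1$ is exactly what fixes the two endpoints.

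However, you should be aware that the paper does not actually prove this theorem. It is stated with the attribution \cite{CC} and used purely as background to motivate the edge-scaling questions; no argument for it appears anywhere in the paper or its appendix. So there is no ``paper's own proof'' to compare against. If anything, the reference \cite{CC} (Capitaine--Casalis, \emph{Asymptotic freeness by generalized moments for Gaussian and Wishart matrices}) suggests that the original derivation proceeds via free probability and moment computations rather than through a variational/equilibrium-measure argument. Your potential-theoretic route is a genuinely different, and in some ways more direct, way to arrive at the same limiting density; it has the advantage of giving the support $[\Lambda_-,\Lambda_+]$ intrinsically from the no-pole constraints, whereas the moment/free-probability approach typically recovers the density first and reads off the edges afterward.
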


Note that the expected edges of the $J(n,n_1,n_2,\beta)$ spectrum are given by
\[ \Lambda_\pm = \left( \frac{ \sqrt{n_1(n_1+n_2-n)}}{n_1+n_2} \pm \frac{\sqrt{ nn_2}}{n_1+n_2} \right)^2.\]

In general we need not restrict ourselves to the cases where $\beta=1,2$ or $4$.  For general $\beta$ the joint density function defines a more general random point process called the $\beta$-Jacobi ensemble and will be denoted $J(n,n_1,n_2,\beta)$. This is a point process that can no longer be associated to any natural ensemble of random matrices for $\beta \neq 1,\, 2$ or $4$. However, we will sometimes refer to the points of the point process as the ``eigenvalues" of the $\beta$-Jacobi ensemble. This slight abuse of terminology will in fact be made rigourous later as we will see that it is possible to construct a family of tridiagonal matrices in such a way that the law of their eigenvalues corresponds to a $\beta$-Jacobi ensemble.

\vspace{2ex}

This paper will focus on the behavior at the edge of the spectrum as $n,\, n_1$ and $n_2$ grow to infinity. This approach relies on a tridiagonal representation of the $\beta$-Jacobi ensembles \cite{KN} and the techniques developped in \cite{RRV} and \cite{RR}. The higher number of parameters makes the phase diagram richer. As there are both a lower and an upper bound on the spectrum, appropriate tuning of the parameters can lead to any combination of soft/hard upper/lower edges. With our notations, the asymptotics of $n_1$ with respect to $n$ will determine the nature of the lower edge, while respective asymptotics for $n_2$ will determine the upper edge.

More precisely, if we write $n_1=n+a_n$ then if $a_n\to a<\infty$ there will be a hard edge at the origin (it is easy to see from (\ref{formula-edges}) that the expected lower edge converges to $0$). In this case, we will prove that the rescaled spectrum converges to the stochastic Bessel process from \cite{ES}. Our analysis is based on general results proved in \cite{RR}.

If $a_n\to \infty$, we will see a soft edge. This situation is more delicate, as it is possible to have $a_n \to +\infty$ and still have the lower edge converging to $0$. We will restrict to the situation $\liminf n_1/n > 1$, which implies that the lower edge will stay away from $0$. We will prove that the rescaled and center eigenvalues converge to the stochastic Airy operator introduced in \cite{ES}. Our approach follows \cite{RRV}. The most general soft-edge case, even in the Laguerre case, remains open (see \cite{RRV}).  Similar considerations apply for the upper edge replacing $n_1$ by $n_2$.

\vspace{3ex}

Several works have been devoted to the edge behavior of the $\beta$-Jacobi ensembles with varying degree of generality (see for example \cite{D,DK, DN, NF, NW}). The works \cite{C} and \cite{J} are restricted to the cases $\beta=1$ and $2$. Johnstone \cite{J} also provides fluctuation results for the top eigenvalue in the cases $\beta=1$ and $2$. All the aformentioned works treat the case of a single extreme eigenvalue. We note that the work \cite{J} uses the results from \cite{RRV} and \cite{RR} to study very degenerate asymptotics for $n_1$ and $n_2$ for which the Jacobi ensemble approximates the Laguerre ensemble. 

 The bulk behavior of a larger class of $\beta$-ensembles that includes the $\beta$-Jacobi ensemble (at least for a whide family of parameters) as been studied in \cite{BEY}.

\vspace{3ex}

This work is organised as follows: in Section \ref{intro-tri} we recall the tridiagonal representation for the $\beta$-Jacobi ensemble proved in \cite{KN} (an alternative approach will be described in the appendix). Section \ref{intro-soft} and \ref{intro-hard} present our results on the soft-edge and the hard-edge respectively. Proofs in the soft edge cases are presented in Section \ref{se} while the hard-edge case is treated in Section \ref{he}. Many details are similar to the corresponding proofs in \cite{RRV} and \cite{RR} and will be omited.

\subsection*{Acknowledgements} The authors would like to thank Benedek Valk\'o for introducing them to the field of random matrices and suggesting this problem.

\section{Results}

\subsection{Tridiagonal representation}\label{intro-tri}

The core of our approach, following \cite{RRV} and \cite{RR}, rests heavily on a tridiagonal representation of $J_{n,n_1,n_2,\beta}$.  The first tridiagonal representation for the $\beta$-Jacobi ensemble was given by Killip and Nenciu \cite{KN}.  We will work with a somewhat simplified model which was introduced by Sutton (\cite{S}, Chapter 5).

\begin{theorem}
\label{tridiagonal}\cite{KN}
The joint density of the eigenvalues of $MM^T$ is given by $f_{\beta,n, n_1, n_2}(\Blambda)$, where
\[M=M_{n,n_1,n_2,\beta}= \left[ \begin{array}{ccccc} C_1 \tilde S_1 & &&& \\
S_2\tilde C_1& C_2 \tilde S_2 &&&\\
& S_3 \tilde C_2 & \ddots && \\
&& \ddots && \\
&&& S_n \tilde C_{n-1} & C_n \end{array}\right]\]
with $C_i^2+S_i^2=1, \tilde C_i^2+\tilde S_i^2=1$ and
\begin{eqnarray*}
C_k&\sim& \sqrt{\textup{Beta}\left(\frac{\beta}{2}(n_1-n+k),\frac{\beta}{2}(n_2-n+k)\right)}\\
\tilde C_k&\sim& \sqrt{\textup{Beta}\left(\frac{\beta}{2}k,\frac{\beta}{2}(n_1+n_2-2n+k+1)\right)}.
\end{eqnarray*}
\end{theorem}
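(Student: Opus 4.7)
The plan is to prove this by a change of variables from the independent Beta random variables $C_1, \ldots, C_n, \tilde C_1, \ldots, \tilde C_{n-1}$ (a total of $2n-1$ parameters) to the spectral data of $T = MM^T$, following the strategy pioneered by Dumitriu--Edelman for the Hermite and Laguerre ensembles and adapted by Killip--Nenciu to the Jacobi case. The joint density of $(C_k,\tilde C_k)$ is the explicit product of Beta densities prescribed in the theorem, and the target is to compute the pushforward to the eigenvalues of $T$ and verify that it matches $f_{\beta,n,n_1,n_2}$.

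First, I would express the entries of $T = MM^T$ explicitly in terms of the $C_k$ and $\tilde C_k$. Since $M$ is lower bidiagonal, $T$ is symmetric tridiagonal with non-negative subdiagonal entries, parametrized by its $n$ diagonal and $n-1$ subdiagonal entries, so the map from the Beta parameters to the entries of $T$ is between spaces of the same dimension $2n-1$; its Jacobian can be computed directly using the constraints $C_k^2 + S_k^2 = 1$ and $\tilde C_k^2 + \tilde S_k^2 = 1$. Next, I would invoke the Trotter--Dumitriu--Edelman parametrization: a symmetric tridiagonal matrix with positive subdiagonal entries is in bijection with pairs $(\lambda, q)$, where $\lambda = (\lambda_1 < \cdots < \lambda_n)$ are its eigenvalues and $q = (q_1, \ldots, q_n)$ is the first row of its orthogonal eigenvector matrix, living on the positive orthant of the unit sphere. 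The Jacobian of this second map factors as the Vandermonde $\prod_{i<j}(\lambda_j - \lambda_i)$ divided by $\prod_k q_k$, times a simple expression in the subdiagonal entries. Composing the two Jacobians with the Beta densities yields the joint density of $(\lambda, q)$.

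The decisive step is then to verify that this joint density factorizes as a function of $\lambda$ alone times a function of $q$ alone; granted this, the $\lambda$-marginal (after integrating over $q$ via a Dirichlet-type identity on the positive orthant of the sphere) should equal $\prod_i \lambda_i^{a}(1-\lambda_i)^{b} \prod_{j<k}|\lambda_j - \lambda_k|^{\beta}$, with the exponents $a = \tfrac{\beta}{2}(n_1 - n + 1) - 1$ and $b = \tfrac{\beta}{2}(n_2 - n + 1) - 1$ emerging from the Beta parameters of the $C_k$ and $\tilde C_k$. The main obstacle, and the crux of the Killip--Nenciu result, is precisely this miraculous factorization: the cross terms involving both eigenvalues and $q$-coordinates must cancel, reflecting the hidden integrability of the $\beta$-Jacobi ensemble. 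As a sanity check, for $\beta = 1, 2, 4$ one can give an independent proof via Householder reduction of the classical MANOVA matrix $X^{1/2}(X+Y)^{-1}X^{1/2}$, which produces the bidiagonal form $M$ with the prescribed Beta distributions, confirming the answer in the classical cases.
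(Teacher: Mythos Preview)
Your outline matches the paper's approach almost exactly: the paper parametrizes by angles $\alpha_k,\theta_k$ with $C_k=\cos\alpha_k$, $\tilde C_k=\cos\theta_k$, computes the Jacobian of $(\Balpha,\Btheta)\to(\Bx,\By)\to(\Ba,\Bb)\to(\Blambda,\Bq)$ using the standard Dumitriu--Edelman formulas you cite, and then checks that the density factors as (function of $\Blambda$)$\times$(function of $\Bq$). So there is no difference in strategy.

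Where your proposal has a genuine gap is at the step you yourself flag as the crux. You say the exponents $a,b$ should ``emerge'' and that the cross terms ``must cancel,'' but you give no mechanism for this. Concretely: $\prod_k\lambda_k=(\det M)^2=\prod_k C_k^2\prod_k\tilde S_k^2$ is immediate, producing the $\lambda_k^a$ factor, but there is no obvious reason why $\prod_k(1-\lambda_k)=\det(I-MM^T)$ should be a clean monomial in the $S_k,\tilde S_k$; without that identity the factorization cannot be completed and the $(1-\lambda_k)^b$ factor cannot be extracted. The paper supplies precisely this missing ingredient via a doubling trick (its Lemma~\ref{doubling}): the singular values of $M$ are encoded in a $2n\times 2n$ symmetric tridiagonal matrix $L$ with zero diagonal and off-diagonal $C_n,S_n\tilde C_{n-1},C_{n-1}\tilde S_{n-1},\dots$, whose eigenvalues are $\pm\sqrt{\lambda_k}$; one then observes that $L+I=AA^T$ for an explicit bidiagonal $A$ with diagonal $1,S_n,\tilde S_{n-1},\dots$, giving
\[
\prod_{k=1}^n(1-\lambda_k)=\det(L+I)=(\det A)^2=\prod_{k=1}^n S_k^2\prod_{k=1}^{n-1}\tilde S_k^2.
\]
This identity is what makes the ``miraculous factorization'' go through; your proposal would be complete once you incorporate it.
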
 

Note that $MM^T$ is indeed tridiagonal.  A proof of this theorem is included in the appendix for the sake of completeness.



\subsection{Soft edge limit}\label{intro-soft}

The proof of the soft edge limit is derived using a more general limiting result by Ram\'irez, Rider and Vir\'ag \cite{RRV}.  This result embeds a sequence of tridiagonal matrices as operators on $L^2[0,\infty)$ and gives conditions for a weak limit under which convergence of the eigenvalues also holds.  We use this result to show that the point process $J(n,n_1,n_2,\beta)$ converges to eigenvalues of a random operator.


We begin by defining the ``stochastic Airy operator" (SAE$_\beta$) .  Let 
\begin{equation}
\mathcal{H}_\beta= - \frac{d^2}{dx^2}+x+ \frac{2}{\sqrt \beta} b'(x)
\end{equation}
where we take $b'$ to be a white noise.  A precise definition and many properties of this operator  can be found in \cite{RRV}.  We review the necessary ones below.

For our purposes it is sufficient to define an eigenfunction/eigenvalue pair in the following way:  Let 
\[L^*= \left\{f\in L^2[0,\infty)| f(0)=0 \text{ and } \int_0^\infty (f')^2+(1+x)f^2 dx<\infty\right\},\]
then $(\varphi ,\lambda)$ is an eigenvalue/eigenfunction pair for $\mathcal{H}_\beta$ if $\|\varphi\|_2=1, \varphi\in L^*$ and 
\begin{equation}
\varphi ''(x)= \frac{2}{\sqrt \beta} \varphi(x)b'(x)+(x-\lambda)\varphi(x)
\end{equation}
holds in the sense of distributions.  This may be written as 
\begin{equation}
\varphi'(x)- \varphi'(0)= \frac{2}{\sqrt \beta} \varphi(x)b(x)-\frac{2}{\sqrt \beta} \int_0^x \varphi'(t)b(t)dt+ \int_0^x(t-\lambda)\varphi (t)dt.
\end{equation}
In this sense, the set of eigenvalues is a deterministic function of the Brownian path $b$.  Moreover the eigenvalues are ``nice'' in the following sense: 

\begin{theorem}
\text{\cite{RRV}}
With probability one, the eigenvalues of $\mathcal{H}_\beta$ are distinct with no accumulation point, and for each $k\geq 0$ the set of eigenvalues of $\mathcal{H}_\beta$ has a well defined $(k+1)$st lowest element $\Lambda_k(\beta)$.
\end{theorem}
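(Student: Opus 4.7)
The plan is to construct $\mathcal{H}_\beta$ as a self-adjoint operator via its associated quadratic form, prove that this form has compact resolvent, and then read off the stated properties from elementary spectral theory. First, I would define the form $\mathcal{E}$ on $L^*$ by formally computing $\langle f, \mathcal{H}_\beta f\rangle$ and using $f(0)=0$ together with integration by parts to rewrite the problematic white noise term $(2/\sqrt{\beta})\int_0^\infty f^2\,db$ as a regular integral:
\[
\mathcal{E}(f) = \int_0^\infty \bigl[(f'(x))^2 + x\,f(x)^2\bigr]\,dx - \frac{4}{\sqrt{\beta}}\int_0^\infty f(x)\,f'(x)\,b(x)\,dx,
\]
which is well-defined pathwise for $f\in L^*$ and a typical Brownian sample $b$.

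Second, I would show that $\mathcal{E}$ is bounded below and coercive. Using the almost sure bound $|b(x)|\le C_\omega(1+x)^{1/2+\epsilon}$ coming from the law of the iterated logarithm, Cauchy--Schwarz yields
\[
\left|\frac{4}{\sqrt{\beta}}\int_0^\infty f f' b\,dx\right| \le \tfrac{1}{2}\int_0^\infty (f')^2\,dx + C'_\omega\int_0^\infty (1+x)^{1+2\epsilon} f^2\,dx,
\]
and absorbing the second piece into the $xf^2$ confinement term for $x$ large (modulo a finite random multiple of $\|f\|_2^2$) produces a lower bound of the form $\mathcal{E}(f)\ge c\bigl(\int(f')^2+\int xf^2\bigr) - K_\omega\|f\|_2^2$. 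Hence the form is closable and its closure defines a self-adjoint operator on $L^2[0,\infty)$ whose form-domain is $L^*$.

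Third, the coercivity estimate identifies the form-domain with a weighted Sobolev space that embeds compactly into $L^2[0,\infty)$: the $\int xf^2$ control provides tightness at infinity while the $H^1$ bound gives equicontinuity on compact intervals, so Rellich--Kondrachov together with tightness produces compactness of the embedding. Compactness of the resolvent then implies that the spectrum is discrete with no finite accumulation point, and boundedness below permits it to be ordered as $\Lambda_0(\beta) \le \Lambda_1(\beta) \le \cdots$ by the min-max principle; infiniteness of the spectrum follows by exhibiting trial subspaces of arbitrary dimension in $L^*$, for instance via rescaled Hermite functions. Simplicity is a consequence of ODE uniqueness applied to the weak eigenvalue equation: any eigenfunction is determined up to scaling by $\varphi(0)=0$ together with $\varphi'(0)$, so the eigenspace for any $\lambda$ is one-dimensional, and strict inequalities $\Lambda_k(\beta)<\Lambda_{k+1}(\beta)$ follow from oscillation theory.

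The main obstacle is the second step, since $b'$ is far too rough to be a form-bounded perturbation of $-d^2/dx^2+x$ in the classical Kato sense. The cancellation exposed by the integration by parts, combined with pathwise control of Brownian motion, is what makes coercivity available and constitutes the technical heart of the argument in \cite{RRV}.
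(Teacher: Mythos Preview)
The paper does not prove this theorem; it is quoted from \cite{RRV} as background. Your overall strategy---defining $\mathcal{H}_\beta$ through its quadratic form, establishing coercivity to obtain compact resolvent, and deducing simplicity from ODE uniqueness for the integrated eigenvalue equation---is exactly the approach of \cite{RRV}.

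However, the coercivity estimate as you have written it fails. After Cauchy--Schwarz you produce a term $C'_\omega\int_0^\infty (1+x)^{1+2\epsilon}f(x)^2\,dx$ and claim it can be absorbed into $\int_0^\infty x\, f(x)^2\,dx$ for large $x$. It cannot: $(1+x)^{1+2\epsilon}$ outgrows $x$ for every $\epsilon>0$, so no constant multiple of the linear potential dominates it. Replacing the crude bound $|b(x)|\le C_\omega(1+x)^{1/2+\epsilon}$ by the sharper law of the iterated logarithm does not help, since $x\log\log x$ still outgrows $x$.

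The remedy in \cite{RRV} is to exploit the \emph{local} oscillation of $b$ rather than its global size. One writes $b=\bar b+(b-\bar b)$ with $\bar b$ a smoothed version of $b$, for instance $\bar b(x)=\int_x^{x+1}b(t)\,dt$. Then $\bar b'(x)=b(x+1)-b(x)$ is an honest continuous function of sublinear growth, so $x+(2/\sqrt\beta)\,\bar b'(x)$ is a genuine potential tending to $+\infty$; meanwhile $|(b-\bar b)(x)|$ is controlled by the oscillation of $b$ on a unit interval near $x$, which is only of order $\sqrt{\log x}$. Integrating by parts against $b-\bar b$ instead of $b$ and applying Cauchy--Schwarz now places a weight of order $\log x$ on $f^2$, and this \emph{is} dominated by the linear confinement. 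With this correction the rest of your outline goes through.
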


\noindent The Airy$_\beta$ point process is given by the eigenvalues of $\mathcal{H}_\beta$. Our first result shows that the spectrum of the Jacobi ensemble near to the soft edge converges to this point process after appropriate scaling and centering.

Let us introduce some notation:  Take 
\begin{align}
\label{cs} c^2& = \frac{n_1}{n_1+n_2}, & s^2&= \frac{n_2}{n_1+n_2}\\
\label{tildecs} \tilde c^2 & = \frac{n}{n_1+n_2}, & \tilde s^2 & = \frac{n_1+n_2-n}{n_1+n_2}
\end{align}
with $c,s, \tilde c$ and $\tilde s$ all nonnegative.  Under this notation we have that the expected edge of the spectrum is given by
\[ \Lambda_{\pm} = (c\tilde s\pm s \tilde c)^2.\]
We define our scaling factor to be
 \begin{equation}
 \label{alphan}
 \alpha_n=\frac{m^2_n}{ cs \tilde c \tilde s}, \hspace{1cm} \text{ where } \hspace{1cm} m_n = \left[ \frac{cs\tilde c \tilde s \sqrt{n_1+n_2}}{\tilde c \tilde s (c^2-s^2)+ c s (\tilde c^2-\tilde s^2)}\right]^{2/3}.
\end{equation} 

\noindent We now state the scaling limit near the soft edge.

\begin{theorem}\label{SEL}
Let $\lambda_1 \geq \lambda_2 \geq \cdots \geq \lambda_n$ denote the ordered eigenvalues of $J(\beta,n,n_1,n_2)$, and assume $\liminf n_2/n > 1$,  then
\[
\Big(\alpha_n(\Lambda_+ - \lambda_\ell)\Big)_{\ell=1,...,k} \Rightarrow \Big(\Lambda_0(\beta),...,\Lambda_{k-1}(\beta)\Big)
\]
jointly in law for any fixed $k<\infty$, as $n\to \infty$.
\end{theorem}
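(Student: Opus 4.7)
The plan is to follow the Ram\'irez--Rider--Vir\'ag strategy \cite{RRV}, exploiting the tridiagonal model of Theorem \ref{tridiagonal}: the eigenvalues of $MM^T$ are distributed as $J(n,n_1,n_2,\beta)$, so it suffices to analyse the smallest eigenvalues of the tridiagonal matrix
\begin{equation*}
T_n \;:=\; \alpha_n\bigl(\Lambda_+ I_n - MM^T\bigr),
\end{equation*}
whose ordered spectrum is precisely $\alpha_n(\Lambda_+-\lambda_\ell)$. Since $M$ is lower bidiagonal, one computes
\begin{equation*}
(MM^T)_{kk} \;=\; C_k^2 \tilde S_k^2 + S_k^2 \tilde C_{k-1}^2, \qquad (MM^T)_{k,k-1} \;=\; C_{k-1} \tilde S_{k-1} \, S_k \tilde C_{k-1},
\end{equation*}
with the natural modifications at $k=1$ and $k=n$. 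Using the explicit Beta laws of $C_k^2$ and $\tilde C_k^2$, the expectations of these entries at $k\approx n$ are, to leading order, $c^2\tilde s^2 + s^2\tilde c^2$ on the diagonal and $cs\tilde c\tilde s$ off-diagonal, whose ``mass-at-edge'' combination $c^2\tilde s^2 + s^2\tilde c^2 + 2cs\tilde c\tilde s = \Lambda_+$ is the correct centering.

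The next step is to embed $T_n$ as an operator on $L^2[0,\infty)$ along the lines of Section~2 of \cite{RRV}: identify $v\in\mathbb{R}^n$ with the step function $\hat v(x) = m_n^{1/2} v_{\lceil m_n x\rceil}$ and rewrite the eigenvalue equation $T_n v = \mu v$ as a second-order difference equation at the spatial scale $x_k = (n-k)/m_n$. A Taylor expansion of the entries around the mean of the Beta variables at $k = n - O(m_n^{1+\epsilon})$, combined with the explicit choice of $\alpha_n$ in \eqref{alphan}, should give a decomposition
\begin{equation*}
T_n \;\approx\; -\Delta_{m_n} + x + \tfrac{2}{\sqrt{\beta}}\,\xi_n(x),
\end{equation*}
where $\Delta_{m_n}$ is a discrete Laplacian at scale $m_n$; the linear potential $x$ arises from the first non-trivial order in the Taylor expansion of the centered means, and is the reason for the specific cubic-root definition of $m_n$; and $\xi_n$ is a discrete noise process whose variance has been arranged to be $4/\beta$ per unit length, so that the partial sums converge to $\frac{2}{\sqrt{\beta}} b(x)$.

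After the expansion is in hand, the abstract tridiagonal-to-SAO convergence machinery of \cite{RRV} applies: the main hypotheses to verify are (i) convergence in distribution of the integrated noise part to Brownian motion, (ii) tightness together with a deterministic lower bound of the form $V_n(x)\geq \eta x - C$ for the potential, and (iii) convergence of $\hat v$-interpolants to $L^2$-functions together with a continuity estimate. This yields the joint convergence of the $k$ smallest eigenvalues of $T_n$ to $\Lambda_0(\beta),\dots,\Lambda_{k-1}(\beta)$. The assumption $\liminf n_2/n > 1$ ensures $s$ and $\tilde s$ stay bounded away from zero, so $\Lambda_+ < 1$ uniformly and the Taylor expansion of the Beta means is non-degenerate. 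The main technical obstacle is precisely this expansion: unlike the $\beta$-Hermite case, each entry of $MM^T$ is a product of two independent square-roots of Beta variables, one controlled by $n_1$ and the other by $n_1+n_2$, and one must track the cancellations between diagonal and off-diagonal contributions that produce the linear Airy potential, while controlling the error terms uniformly in the edge window $k \in [n - O(m_n^{1+\epsilon}), n]$. This is the step where the richer Jacobi phase diagram genuinely departs from \cite{RRV} and the bulk of the work lies.
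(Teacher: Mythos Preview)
Your plan is correct and mirrors the paper's proof: the paper reindexes the bidiagonal matrix (defining $Z_{n,\beta}$ so that row $k$ involves $C_{n-k+1},\tilde S_{n-k}$, etc., which is your change of variable $x_k=(n-k)/m_n$ made explicit) and then verifies the two Assumptions of the abstract RRV convergence theorem (stated here as Theorem~\ref{general soft}) for $H_n=\alpha_n(\Lambda_+ I_n-Z_{n,\beta}Z_{n,\beta}^T)$.

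One point of emphasis worth correcting: you locate ``the bulk of the work'' in the Taylor expansion of the Beta means over the edge window $k\in[n-O(m_n^{1+\epsilon}),n]$, but in the paper that part (Assumption~1, convergence on compacts) is comparatively routine---moment computations via Proposition~\ref{kurtz}. The genuinely delicate step is the \emph{global} growth bound (Assumption~2, inequalities (\ref{tightness1})--(\ref{oscillation})), which demands two-sided control of $\eta_{n,1,k}+\eta_{n,2,k}=m_n E(\Delta y_{n,1,k}+\Delta y_{n,2,k})$ for \emph{all} $1\le k\le n-1$, far outside the edge window. The paper handles this by splitting into the ranges $1\le k\le\alpha n$ and $\alpha n\le k\le n-1$, using the inequality $\sqrt{x}(1-2/x)\le\Gamma(x+\tfrac12)/\Gamma(x)\le\sqrt{x}$ and a monotonicity argument for the lower bound; your outline does mention a potential lower bound $V_n(x)\ge\eta x-C$ in item~(ii), but you should be aware that establishing it uniformly in $k$ is where the Jacobi-specific work actually concentrates.
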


\begin{remark} \label{symmetry}
This theorem describes the limiting behavior of the $\beta$-Jacobi ensemble in the upper soft edge situation.  This is sufficient to determine the behavior in the lower soft edge because $J(n,n_1,n_2,\beta)$ is symmetric in $n_1$ and $n_2$.  That is, the reflection of the density of $J(n,n_1,n_2,\beta)$ with respect to $x=1/2$ is the corresponding density for $J(n,n_2,n_1,\beta)$.
\end{remark}


\begin{remark}
\label{mn bound}
In the situation where $n_1$ and $n_2$ are constant multiples on $n$, then $\alpha_n= c n^{2/3}$ for some constant $c$.  One can compare this with the scaling exponents in the Tracy-Widom result. 
In fact in the case where $\liminf n_2/n\geq 1+\epsilon$.  Then with a little bit of work one can show that there exist constants $c_1$ and $c_2$ depending only on $\epsilon$ so that $c_1 n^{1/3} \leq m_n \leq c_2n^{1/2}$.
\end{remark}

\begin{remark}
While we expect a soft edge type result for $\beta$-Jacobi spectrum in the case where $n_2=n+a_n$ and $a_n\to \infty$ we restrict  to the case where $\liminf n_2/n > 1$.  We make this restriction on $n_2$ because, as the previous remark suggests, the order of $m_n$ can change substantially which will render many computations invalid.  A similar gap exists in the $\beta$-Laguerre case for the lower soft edge.
\end{remark}




\subsection{Hard edge limit}\label{intro-hard}  The proof of the hard edge limit will follow largely from the proof of Ram\'irez and Rider in \cite{RR}.  We will formulate a more general theorem which gives conditions for convergence of the eigenvalues, and apply this in the specific setting of the $\beta$-Jacobi ensemble.  The main idea is again to embed the tridiagonal matrices as operators and show that operator convergence implies convergence of the eigenvalues.  However the operator convergence is not shown directly, instead we work with inverse operators to draw our conclusions.

For convenience and in analogy to the $\beta$-Laguerre ensemble, we will focus on the lower hard edge. We can see from Remark \ref{symmetry} that this is indeed sufficient to determine the upper edge as well.

We recall the definition of the stochastic Bessel operator, studied by Ram\'irez and Rider in \cite{RR} to describe the limit of the Laguerre ensemble in the hard-edge case.  The operator acts on functions on $\rr_+$ and is given by:
\[\mathfrak{G}_{\beta,a}=-\exp \left[ (a+1)x+ \frac{2}{\sqrt{\beta}} b(x) \right] \cdot \frac{d}{dx} \left( \exp\left[-ax- \frac{2}{\sqrt{\beta}}b(x)\right]\frac{d}{dx}\right),\]
where $b(x)$ is a Brownian motion, $a>-1$ and $\beta>0$. This can be rewritten as 
\[-\mathfrak{G}_{\beta,a}= e^x\left( \frac{d^2}{dx^2}-(a+\frac{2}{\sqrt{\beta}} b'(x))\frac{d}{dx}\right).\]

\noindent With Dirichlet boundary conditions at 0 and Neumann conditions at infinity the inverse operator as given by Ram\'irez and Rider \cite{RR} is:
\begin{equation}\label{inv-spectral-bessel}
( \mathfrak{G}_{\beta,a}^{-1} \psi)(x)\equiv \int_0^\infty\left(\int_0^{x \wedge y} e^{az+\frac{2}{\sqrt \beta}b(z)}dz\right)\psi(y) e^{-(a+1)y- \frac{2}{\sqrt \beta}b(y)}dy.
\end{equation}
The operator $\mathfrak{G}_{\beta,a}^{-1}$ is non-negative symmetric in $L^2[\rr_+,m]$ where
\[m(dx)= e^{-(a+1)x- \frac{2}{\sqrt \beta}b(x)}dx.\]
We may then define the eigenvalues and eigenfunctions of $\mathfrak{G}_{\beta,a}$ by taking the equation $\mathfrak{G}_{\beta,a} \psi= \lambda \psi$ to mean $ \psi = \lambda \mathfrak{G}_{\beta,a}^{-1} \psi$.  Moreover, it can be shown that the spectrum defines a simple point process as desired.

\begin{theorem}
\cite{RR}
With probability one, when restricted to the positive half-line with Dirichlet boundary condition (at the origin), $\mathfrak{G}_{\beta,a}$ has a discrete spectrum of simple eigenvalues $0<\Lambda_0(\beta,a)<\Lambda_1(\beta,a)<\cdots \uparrow \infty$.
\end{theorem}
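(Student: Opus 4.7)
The plan is to work entirely through the inverse operator $\mathfrak{G}_{\beta,a}^{-1}$ defined in (\ref{inv-spectral-bessel}) and then invert the spectral data at the end. Specifically, I would show that on the weighted Hilbert space $L^2(\rr_+, m)$ the operator $\mathfrak{G}_{\beta,a}^{-1}$ is bounded, self-adjoint, non-negative, injective, and compact (in fact Hilbert-Schmidt), almost surely. The spectral theorem for compact self-adjoint operators then produces a nonincreasing sequence of strictly positive eigenvalues $\mu_0 \geq \mu_1 \geq \cdots$ accumulating only at $0$; setting $\Lambda_k(\beta,a) = 1/\mu_k$ yields the claimed discrete spectrum of $\mathfrak{G}_{\beta,a}$ accumulating at $+\infty$.

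Writing $(\mathfrak{G}_{\beta,a}^{-1}\psi)(x) = \int_0^\infty \tilde K(x,y)\psi(y)\, m(dy)$ with symmetric kernel $\tilde K(x,y) = \int_0^{x\wedge y} e^{az + (2/\sqrt{\beta}) b(z)}\, dz$ makes self-adjointness manifest. Non-negativity and injectivity both fall out of the identity
\[
\langle \mathfrak{G}_{\beta,a}^{-1}\psi,\psi\rangle_m \;=\; \int_0^\infty e^{az + (2/\sqrt{\beta}) b(z)}\left(\int_z^\infty \psi(y)\, m(dy)\right)^{\!2} dz,
\]
obtained by interchanging integrations via $\tilde K(x,y) = \int_0^\infty \mathbf{1}_{z \leq x \wedge y}\, e^{az + (2/\sqrt{\beta}) b(z)}\, dz$; the inner integrals vanishing for almost every $z$ forces $\psi \equiv 0$ in $L^2(m)$. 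The central technical step is to verify the Hilbert-Schmidt bound $\int\int \tilde K(x,y)^2\, m(dx)\, m(dy) < \infty$ almost surely. For this I would rely on standard asymptotics of Brownian motion: with probability one, for any $\epsilon > 0$ there is a finite random constant $C$ with $|b(z)| \leq C + \epsilon z$ for all $z \geq 0$. Substituting into both $\tilde K$ and $m$ with $\epsilon$ small in terms of $a$ and $\beta$ reduces the estimate to a deterministic integral whose convergence at infinity is guaranteed by the assumption $a > -1$ and whose integrability at the origin is visible from the analytic form of $\tilde K$.

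Simplicity of each eigenvalue follows from the ODE formulation $\mathfrak{G}_{\beta,a}\psi = \lambda\psi$, which is a second-order Sturm-Liouville equation in a random environment. The Dirichlet boundary condition $\psi(0) = 0$ confines the initial data to a one-parameter family indexed by $\psi'(0)$, and the additional requirement that $\psi$ belong to the form domain of the operator does not reduce this family further, so any nontrivial eigenfunction is a single scalar multiple of the solution starting at $(0, 1)$. The main obstacle is the Hilbert-Schmidt estimate of the previous paragraph: one has to carefully balance the Brownian motion appearing both inside $\tilde K$ and in $m$ against the polynomial drifts, and the bookkeeping becomes most delicate in the regime where $a$ is close to $-1$, where the exponential decay built into $m$ barely wins against the exponential growth inside $\tilde K$.
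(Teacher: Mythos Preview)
The paper does not prove this theorem; it is quoted from Ram\'irez--Rider \cite{RR} and invoked as a black box (see the last paragraph of Section~\ref{he}, where the authors explicitly defer to \cite{RR} for the simplicity and ordering of the eigenvalues). So there is no ``paper's own proof'' to compare against.

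That said, your outline is essentially the argument that appears in \cite{RR}: one works with the explicit inverse kernel, establishes that $\mathfrak{G}_{\beta,a}^{-1}$ is a nonnegative, injective, self-adjoint Hilbert--Schmidt operator on $L^2(\rr_+,m)$, and reads off the spectral structure from the compact self-adjoint spectral theorem; simplicity then comes from the second-order ODE viewpoint with the Dirichlet condition at the origin. Your use of the almost-sure linear envelope $|b(z)|\le C(\omega)+\epsilon z$ (valid for any fixed $\epsilon>0$ by the law of the iterated logarithm) to control the Hilbert--Schmidt norm is the natural way to make the estimate go through, and choosing $\epsilon$ small relative to $a+1$ and $\beta$ indeed renders the resulting deterministic double integral finite. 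The only place where a reader might ask for more detail is the simplicity argument: one should also verify that the eigenvalue equation, written in integrated form against the Brownian path, is genuinely well-posed as an initial value problem so that the one-parameter family claim is rigorous---but this is exactly what \cite{RR} handles, and your sketch correctly identifies the mechanism.
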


We will show that the Bessel operator will describe the limiting spectrum of the $(\beta, n, n_1,n_2)$-Jacobi ensemble in the hard-edge case. 

\begin{theorem}\label{HEL}
Let $0<\lambda_0<\lambda_1<\cdots <\lambda_{n-1}$ be the ordered eigenvalues of  $J(\beta,n,n_1,n_2)$, $m_n=n n_2$ and $n_2>n$.  Assume that $(n_1-n)\to a \in (-1,\infty)$, then
\[\Big(m_n \lambda_0,m_n \lambda_1,...,m_n \lambda_{k}\Big) \Rightarrow \Big( \Lambda_0(\beta,a),\Lambda_1(\beta ,a), ..., \Lambda_{k}(\beta,a)\Big)\]
jointly in law, for any fixed $k<\infty$ as $n\to \infty$.
\end{theorem}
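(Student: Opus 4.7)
The plan is to follow the Ram\'irez--Rider strategy for the Laguerre hard edge \cite{RR}. By Theorem \ref{tridiagonal}, the Jacobi spectrum coincides in law with the spectrum of $MM^T$, where $M$ is the lower bidiagonal matrix with diagonal $d_i = C_i \tilde S_i$ and subdiagonal $e_i = S_i \tilde C_{i-1}$. The smallest $k+1$ eigenvalues $\lambda_0<\cdots<\lambda_k$ of $MM^T$ are the reciprocals of the largest $k+1$ eigenvalues of $(MM^T)^{-1}$, so proving $m_n\lambda_j\to\Lambda_j(\beta,a)$ jointly is equivalent to proving that the top $k+1$ eigenvalues of the compact operator $m_n^{-1}(MM^T)^{-1}$ converge jointly to the top $k+1$ eigenvalues of $\mathfrak{G}_{\beta,a}^{-1}$. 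An abstract operator-convergence criterion in the spirit of \cite{RR} (the general statement alluded to in Section \ref{intro-hard}) will then reduce the theorem to convergence of these inverse operators in a sufficiently strong topology.

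The key computation is an explicit formula for $(MM^T)^{-1}$. Solving $Mv=e_j$ gives $(M^{-1})_{ij}=(-1)^{i-j}d_j^{-1}\prod_{l=j+1}^i(e_l/d_l)$ for $i\ge j$, and conjugating by $\operatorname{diag}((-1)^i)$ (which preserves the spectrum) yields, for $i\le j$,
\[
\big((MM^T)^{-1}\big)_{ij}=\frac{1}{d_i d_j}\prod_{l=i+1}^j\frac{e_l}{d_l}\sum_{k=j}^n\prod_{l=j+1}^k\Big(\frac{e_l}{d_l}\Big)^2.
\]
Embed $\ell^2(\{1,\dots,n\})$ into $L^2(\mathbb{R}_+)$ via step functions on a lattice whose spacing is chosen so that the resulting integral kernel $K_n(x,y)$ of $m_n^{-1}(MM^T)^{-1}$ stays bounded in the limit. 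The candidate limit is the kernel
\[
G(x,y)=\left(\int_0^{x\wedge y}e^{az+\tfrac{2}{\sqrt\beta}b(z)}\,dz\right)e^{-(a+1)y-\tfrac{2}{\sqrt\beta}b(y)}
\]
of $\mathfrak{G}_{\beta,a}^{-1}$ from (\ref{inv-spectral-bessel}).

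To match $K_n$ with $G$, I would analyze the logarithm of the multiplicative walk $l\mapsto\prod(e_l/d_l)$ in the regime $n_1-n\to a\in(-1,\infty)$, $n_2>n$. The Beta distributions $C_l^2\sim\operatorname{Beta}(\tfrac{\beta}{2}(a_n+l),\tfrac{\beta}{2}(n_2-n+l))$ and $\tilde C_{l-1}^2\sim\operatorname{Beta}(\tfrac{\beta}{2}(l-1),\tfrac{\beta}{2}(n_1+n_2-2n+l))$ both force $C_l,\tilde C_{l-1}\to 0$, so $S_l,\tilde S_l\to 1$ and $(e_l/d_l)^2\approx \tilde C_{l-1}^2/C_l^2$. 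Using digamma expansions of the moments of $\log\operatorname{Beta}$ together with a martingale invariance principle as in \cite{RR}, one identifies
\[
\sum_{l=1}^{\lfloor nx\rfloor}\log(e_l/d_l)^2-(\text{deterministic drift})\;\Rightarrow\; ax+\tfrac{2}{\sqrt\beta}b(x).
\]
After exponentiating and choosing the global scale $m_n=nn_2$, the outer product $\prod_{l=i+1}^j e_l/d_l$ combined with $1/(d_i d_j)$ produces the factor $e^{-(a+1)y-\frac{2}{\sqrt\beta}b(y)}$, while the inner sum $\sum_{k=j}^n\prod_{l=j+1}^k(e_l/d_l)^2$ becomes a Riemann sum for $\int_0^{x\wedge y}e^{az+\frac{2}{\sqrt\beta}b(z)}dz$.

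The principal obstacle will be uniform tail control: pointwise kernel convergence is not enough to move the top of the spectrum, so one needs tightness of the family $\{m_n^{-1}(MM^T)^{-1}\}_n$ in a sufficiently strong topology (e.g.\ Hilbert--Schmidt) on the random $L^2$ space carrying the Brownian-driven measure, together with the absence of spectral mass escaping to infinity. This reduces, as in \cite{RR}, to uniform sub-Gaussian bounds on the log increments $\log(e_l/d_l)$ and their partial-sum tails. The two-family structure of the Beta parameters (through both $C_l$ and $\tilde C_l$), and the fact that $n_2$ is essentially arbitrary above $n$, mean that the estimates must correctly balance the competing roles of $n$ and $n_2$ in the scale $m_n=nn_2$; this is where the argument will require the most work beyond quoting \cite{RR} directly.
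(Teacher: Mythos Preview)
Your strategy is the paper's: invert the bidiagonal matrix, embed as an integral operator, and prove Hilbert--Schmidt convergence to the inverse Bessel kernel. The paper actually works with $K_n=(\sqrt{m_n}\,W_{n,\beta})^{-1}$ rather than $(MM^T)^{-1}$ directly, embedded in $L^2(0,1]$ on the uniform lattice $k/n$, and identifies the resulting spectral problem $f=\lambda K^TKf$ with $\psi=\lambda\,\mathfrak G_{\beta,a}^{-1}\psi$ only \emph{after} a change of variables; the convergence step is packaged as a stand-alone statement (Theorem~\ref{inverselimit}) with four assumptions to verify.

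There is, however, a genuine gap in your outline. The approximation $C_l,\tilde C_{l-1}\to 0$ (hence $S_l,\tilde S_l\to 1$) for $l=\lfloor nx\rfloor$ is valid only when $n_2\gg n$. Under the sole hypothesis $n_2>n$ the ratio $n_2/n$ may stay bounded, and for $n_2/n\to\gamma\in[1,\infty)$ one has $E\,C_{\lfloor nx\rfloor}^2\to x/(\gamma-1+2x)$ and likewise for $\tilde C$, both bounded away from $0$ and $1$; your reduction $(e_l/d_l)^2\approx\tilde C_{l-1}^2/C_l^2$ then fails. The paper deals with this by splitting into the regimes $n_2/n\to\gamma<\infty$ and $n_2\gg n$ (every subsequence has a further subsequence in one of them), obtaining \emph{different} limiting kernels in each case (Lemmas~\ref{mainterm} and~\ref{process}), and then checking separately that both spectral problems reduce to the Bessel one via the substitutions $x(x+\gamma-1)=\gamma p$ and $x=p$ respectively. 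Relatedly, your claimed limit $ax+\tfrac{2}{\sqrt\beta}b(x)$ for $\sum_{l\le nx}\log(e_l/d_l)^2$ is incorrect on the uniform $[0,1]$ scale: even when $n_2\gg n$ the drift is $\tfrac{a}{2}\log(y/x)$ and the martingale part is $\beta^{-1/2}\int_y^x s^{-1/2}\,db_s$, which become your linear drift and standard Brownian motion only after the logarithmic substitution $p\mapsto e^{-x}$. The identification of your inner sum $\sum_{k=j}^n\prod_{l>j}(e_l/d_l)^2$ with $\int_0^{x\wedge y}e^{az+\frac{2}{\sqrt\beta}b(z)}\,dz$ likewise requires that change of variables, together with the orientation reversal it entails (large matrix index corresponds to small $z$).
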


\begin{remark}
\label{coupling}
Previous work on both the hard and the soft edge of the $\beta$-Jacobi was done through a coupling with the $\beta$-Laguerre ensemble by Jiang \cite{Jiang}.  This work covers the case where $n / n_1 \to \gamma \in (0,1]$ and $n = o(\sqrt{n_2})$.  Our work extends the results to all cases for which $\liminf n_2/n >1$.
\end{remark}


\section{Convergence of the spectrum at the soft edge}\label{se}

We begin by stating a general result of Ram\'irez, Rider, and Virag \cite{RRV} giving conditions for a sequence of operators to converge to the stochastic Airy operator in an appropriate sense. Our work will then consist of verifying the hypothesis of this theorem in the $\beta$-Jacobi case.

Let $H_n:\rr^n \to \rr^n$ be the linear operator whose associated matrix with respect to the standard bases is symmetric, tridiagonal with diagonal entries $(2m_n^2+m_n(y_{n,1,k}-y_{n,1,k-1}), k\geq 1)$ and off-diagonal entries $(-m_n^2+m_n(y_{n,2,k}-y_{n,2,k-1})/2,k\geq 1)$.

We define define the step functions $y_{n,i}(x)=y_{n,i,\lfloor xm_n \rfloor} \ind_{x m_n \in [0,n]}$ and make the following assumptions:

\noindent {\it Assumption 1 (Tightness/Convergence)} There exists a continuous process $x\mapsto y(x)$ such that 
\begin{eqnarray*}
\big(y_{n,i}(x); x\geq 0\big) && i=1,2 \ \ \text{ are tight in law}\notag \\
\big(y_{n,1}(x)+y_{n,2}(x);x \geq 0\big) & \Rightarrow &\big(y(x); x\geq 0\big) \ \ \text{ in law,}\notag
\end{eqnarray*}
with respect to the Skorokhod topology.

\noindent{\it Assumption 2 (Growth/Oscillation bound)} There is a decomposition
\begin{equation}
y_{n,i,k}= m_n^{-1} \sum_{\ell=1}^k \eta_{n,i,\ell}+ \omega_{n,i,k}
\end{equation}
with $\eta_{n,i,k}\geq 0$, such that there are deterministic unbounded nondecreasing continuous functions $\overline{\eta}(x)>0, \zeta(x) \geq 1,$ and random constants $\mu_n(\omega)\geq 1$ defined on the same probability space which satisfy the following:  The $\mu_n$ are tight in distribution, and, almost surely
\begin{eqnarray}
\overline{\eta}(x)/\mu_n - \mu_n \leq \eta_{n,1}(x)+ \eta_{n,2}(x) &\leq& \mu_n(1+ \overline{\eta}(x)) \label{tightness1}\\
\eta_{n,2}(x) & \leq & 2m_n^2\\
\label{tightness2}
|\omega_{n,1}(\xi)-\omega_{n,1}(x)|^2+ |\omega_{n,2}(\xi)-\omega_{n,2}(x)|^2& \leq & \mu_n(1+ \overline{\eta}(x)/\zeta (x)) \label{oscillation}
\end{eqnarray}
for all $n$ and $x,\xi \in [0,m_n]$ with $|x-\xi|\leq 1$.

\begin{theorem} \cite{RRV}
\label{general soft}
Given Assumptions 1 and 2 above and any fixed $k$, the bottom $k$ eigenvalues of the matrix $H_n$ converge in law to the bottom $k$ eigenvalues of the operator $H$, where
\[H=- \frac{d^2}{dx^2}+ y'(x).\]
\end{theorem}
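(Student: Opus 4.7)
The plan is to lift the discrete eigenvalue problem for $H_n$ onto $L^2(\mathbb{R}_+)$ through an embedding, identify the limit of the associated quadratic form with the form of the Airy-type operator $H = -\partial_x^2 + y'$, and then transfer convergence of bottom eigenvalues via the min–max principle plus a compactness argument. I would fix an embedding $\iota_n : \mathbb{R}^n \to L^2(\mathbb{R}_+)$ sending $v=(v_1,\ldots,v_n)$ either to the step function of height $m_n^{1/2}v_k$ on $[(k{-}1)/m_n, k/m_n)$ or (for handling the Dirichlet form) to its piecewise linear interpolant, normalized so that $\|v\|_{\ell^2}=\|\iota_n v\|_{L^2}$ up to $o(1)$. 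Writing $f_n=\iota_n v$ and using the algebraic identity $2m_n^2 v_k^2 - 2m_n^2 v_k v_{k+1} = m_n^2(v_{k+1}-v_k)^2 + m_n^2(v_k^2-v_{k+1}^2)$, the Dirichlet part of $\langle v, H_n v\rangle$ converges to $\int_0^\infty (f_n')^2\,dx$ plus vanishing boundary terms.

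The potential terms $m_n\sum_k (\Delta y_{n,1,k})\,v_k^2$ and $m_n\sum_k (\Delta y_{n,2,k})\,v_k v_{k+1}$ are rewritten by Abel summation so that $y_{n,i}$ itself (not its increments) is paired against the discrete derivative of $f_n^2$; symbolically this turns each sum into $-\int y_{n,i}(x)\,d(f_n^2)(x)$ plus boundary values. Assumption 1, combined with a Skorokhod coupling, lets $y_{n,1}+y_{n,2}\to y$ locally uniformly, so for any fixed continuously differentiable test function $f$ the bilinear form $\langle v, H_n v\rangle$ converges to $\int_0^\infty(f')^2 + \int_0^\infty f^2\, dy$, which is exactly the quadratic form of $H$ on $L^*$ (the Stieltjes integral against $y$ being defined through integration by parts).

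Next I would establish tightness and compactness of low-lying eigenvectors using Assumption 2. The nonnegativity of $\eta_{n,i}$ and the lower bound $\eta_{n,1}+\eta_{n,2}\ge \overline{\eta}/\mu_n - \mu_n$, with $\overline{\eta}$ unbounded, produces a confining potential contribution that forces any $v^{(n)}$ with Rayleigh quotient bounded by a constant $C$ to be effectively supported in a compact interval $[0,R_C]$ (say, by a Gronwall/Agmon-style tail estimate), and its Dirichlet energy to be bounded. The decomposition $y_{n,i,k}=m_n^{-1}\sum_\ell \eta_{n,i,\ell}+\omega_{n,i,k}$ is crucial: the $\eta$-component supplies the Riemann-sum approximation of the drift through a monotone convergence argument, while the oscillation bound (\ref{oscillation}) on $\omega_{n,i}$ ensures that the fluctuating contribution to the form, controlled by a discrete Young-type inequality against $\int(f_n')^2$ and the growth term, remains uniformly small. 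This yields equicontinuity and $L^2$-precompactness of the embedded normalized eigenvectors.

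Finally I would conclude by the min–max principle. For the upper bound $\limsup_n \lambda_k(H_n)\le \Lambda_k$, take the span of the first $k+1$ eigenfunctions of $H$, discretize via sampling to obtain test vectors in $\mathbb{R}^n$, and apply the form convergence of paragraph 2 to show their Rayleigh quotients converge. For the matching lower bound $\liminf_n \lambda_k(H_n)\ge \Lambda_k$, take any $(k+1)$-dimensional minimizing subspace for $H_n$, extract an $L^2$-limit subspace of dimension $k+1$ via paragraph 3, and use form convergence plus lower semicontinuity to identify the limit as a feasible subspace for the min–max of $H$. Joint convergence of the bottom $k$ eigenvalues, and joint convergence with the underlying Brownian path, follows from the Skorokhod coupling. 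The principal technical obstacle is the roughness of the limit potential $y'$ (a distribution, not a function): all rigorous manipulation of $\int f^2\, dy$ must go through integration by parts against the continuous function $y$, and both the form convergence and the eigenvector tightness must be threaded through the $\eta/\omega$ decomposition of Assumption 2, which is precisely what separates the well-controlled drift from the merely displacement-controlled fluctuation.
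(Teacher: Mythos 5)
This statement is cited verbatim from Ram\'irez, Rider, and Vir\'ag \cite{RRV}; the paper presents no proof of its own and uses it as a black box in deriving Theorem \ref{SEL}. Your sketch is, at the level of strategy, a faithful reconstruction of the argument in \cite{RRV}: embed $\mathbb{R}^n$ into $L^2[0,\infty)$ by the $m_n$-scaled step/linear interpolant, split the quadratic form into the discrete Dirichlet energy and the Abel-summed pairing of $y_{n,i}$ against a discrete derivative of $f^2$, pass to $\int (f')^2 - \int y\,(f^2)'$ on a core after a Skorokhod coupling, exploit the $\eta/\omega$ decomposition of Assumption 2 for coercivity and tail localization (the $\eta$ side) and for suppressing the fluctuation contribution (the $\omega$ side and the oscillation bound \eqref{oscillation}), then close with the two-sided variational bounds. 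Two places where your outline underplays the technical burden that \cite{RRV} actually carry: (i) in the Abel-summation step the off-diagonal sum $m_n\sum_k \Delta y_{n,2,k}\, v_k v_{k+1}$ is not of the form $\sum \Delta y\, v_k^2$, and turning $v_k v_{k+1}$ into $v_k^2$ introduces a correction proportional to the discrete Dirichlet energy that must be absorbed using the bound $\eta_{n,2}\le 2m_n^2$; and (ii) the ``Agmon-style'' localization is not a by-hand Gronwall estimate but is encoded through a uniform lower bound on the form of the type $\langle v, H_n v\rangle \ge c_1\|v'\|^2 + c_1\int \bar\eta\, v^2 - c_2\|v\|^2$, with random (tight) constants $c_1,c_2$ coming from $\mu_n$; this is what simultaneously delivers $L^2$-precompactness and dimension preservation of low-lying eigenspaces in the $\liminf$ direction. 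With those caveats, this is the same proof as in the cited reference, not an alternative route.
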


Here the eigenfunction/eigenvalue pairs of $H$ should be understood in the same way as those of $\mathcal{H}_\beta$ as shown in Section \ref{intro-soft}.  We will show that a tridiagonal matrix with eigenvalues corresponding to the  $\beta$-Jacobi ensemble satisfies the requirements of the theorem with limiting operator $\mathcal{H}_\beta$.


\subsection{The $\beta$-Jacobi model}

We use the following tridiagonal model for the $\beta$-Jacobi ensemble.  Let
\[
Z_{n,\beta}=\left[
\begin{array}{ccccc}
C_n &  S_{n} \tilde C_{n-1}&&&\\
&C_{n-1} \tilde S_{n-1}&S_{n-1} \tilde C_{n-2}&&\\
\\
&&\ddots&\ddots&\\
\\
&&&C_2 \tilde S_2& S_1 \tilde C_2\\
&&&&C_1 \tilde S_1
\end{array}
\right]
\]
with $C_k,S_k, \tilde C_k,$ and $\tilde S_k$ defines as in Theorem \ref{tridiagonal}.  Clearly $\sigma(Z_{n,\beta}Z_{n,\beta}^T)=\sigma (M_{n,\beta}M_{n,\beta}^T)$ in distribution. 

 Recall the definitions of $m_n$ and $\alpha_n$ from section \ref{intro-soft} given in (\ref{alphan}).  We consider the matrix 
\[H_n= \alpha_n \Big( (c\tilde s+s\tilde c)^2I_n- Z_{n,\beta}Z_{n,\beta}^T\Big).\]
This matrix has diagonal and off-diagonal entries given respectively by 
\[2m_n^2+\frac{m_n^2}{cs\tilde c\tilde s} (c^2\tilde s^2+s^2\tilde c^2- S_{n-k+1}^2\tilde C_{n-k}^2-C_{n-k}^2\tilde S_{n-k}^2)\]
and
\[-m_n^2+\frac{m_n^2}{cs\tilde c\tilde s} (cs\tilde c\tilde s- C_{n-k}S_{n-k} \tilde C_{n-k}\tilde S_{n-k-1}).\]
Note that the term $(c\tilde s+s\tilde c)^2$ is the $\Lambda_+$ defined in the introduction.  We define $\Delta y_{n,i,k}= y_{n,i,k}-y_{n,i,k-1}$ with

\begin{eqnarray*}
\Delta y_{n,1,k}&=& \frac{m_n}{cs\tilde c\tilde s} (c^2\tilde s^2+s^2\tilde c^2- S_{n-k+1}^2\tilde C_{n-k}^2-C_{n-k}^2\tilde S_{n-k}^2)\\
\Delta y_{n,2,k}&=&\frac{2m_n}{cs\tilde c\tilde s} (cs\tilde c\tilde s- C_{n-k}S_{n-k} \tilde C_{n-k}\tilde S_{n-k-1}).
\end{eqnarray*}

\noindent The proof now consists of verifying the hypothesis of Theorem \ref{general soft}.


\subsection{Checking Assumption 1}

To show that $H_n$ satisfies Assumption 1 of Theorem \ref{general soft} we use the following proposition which is a simple modification of Theorem 7.4.1 and Corollary 7.4.2 in \cite{EK}.

\begin{proposition}[\cite{EK}]
\label{kurtz}
Let $f\in C^1(\rr^+)$ and $g\in C^1(\rr^+)$, and let $y_n$ be a sequence of processes with $y_{n,0}=0$ and independent increments.  Assume that 
\[\frac{1}{\epsilon_n}E(\Delta y_{n,k})= f'(k \epsilon_n)+o(1), \hspace{.8cm} \frac{1}{\epsilon_n}\var(\Delta y_{n,k})= g^2(k \epsilon_n)+o(1), \hspace{.8cm} \frac{1}{\epsilon_n}E(\Delta y_{n,k})^4=o(1)\]
uniformly for $k \epsilon_n$ on compact sets as $n\to \infty$.  Then $y_n(t)=y_{n,\lfloor t/\epsilon_n \rfloor}$ converges in law, with respect to the Skorokhod topology, to the process $f(t)+\int_0^t g(s)db_s$, where $b$ is a standard Brownian motion.

\end{proposition}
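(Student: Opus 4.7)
The plan is to view this as a functional central limit theorem for a triangular array of independent increments and to reduce it to the martingale invariance principle in \cite{EK}. First I would split $y_{n,k}$ into a drift plus a mean-zero martingale by setting
\[
M_{n,k} := \sum_{\ell=1}^k \ev[\Delta y_{n,\ell}], \qquad N_{n,k} := y_{n,k} - M_{n,k}.
\]
By independence of increments, $(N_{n,k})_k$ is a martingale in $k$, and the limit processes $f(t)$ and $\int_0^t g(s)\,db_s$ are independent since the former is deterministic, so it suffices to treat $M_n$ and $N_n$ separately and combine.

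For the drift, the assumption $\epsilon_n^{-1} \ev[\Delta y_{n,k}] = f'(k\epsilon_n) + o(1)$ uniformly on compacts yields
\[
M_n(t) \,=\, \sum_{k=1}^{\lfloor t/\epsilon_n \rfloor} \epsilon_n \bigl(f'(k \epsilon_n) + o(1)\bigr),
\]
a Riemann sum for $\int_0^t f'(s)\,ds = f(t) - f(0)$. Continuity of $f'$ together with the compact-set uniformity of the error pushes this from pointwise to locally uniform convergence, hence convergence in the Skorokhod topology.

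For the martingale part I would invoke the martingale functional CLT (Theorem 7.1.4 of \cite{EK}), which gives convergence of $N_n$ to a continuous Gaussian martingale once two things are checked. First, the predictable quadratic variation must converge: by independence, $\langle N_n\rangle(t) = \sum_{k \leq t/\epsilon_n} \var(\Delta y_{n,k})$, and the second hypothesis turns this into a Riemann sum converging to $\int_0^t g(s)^2\,ds$. Second, a Lindeberg condition on the jumps of $N_n$ is needed: by Markov's inequality,
\[
\ev\bigl[(\Delta y_{n,k})^2 \ind_{|\Delta y_{n,k}| > \delta}\bigr] \,\leq\, \delta^{-2}\, \ev\bigl[(\Delta y_{n,k})^4\bigr],
\]
and summing over $k \leq t/\epsilon_n$, the fourth-moment assumption gives $\sum_k \epsilon_n \cdot (\epsilon_n^{-1} \ev[(\Delta y_{n,k})^4]) = o(t)$. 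Thus $N_n \Rightarrow \int_0^\cdot g(s)\,db_s$, and adding in the drift yields the claim.

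The only point requiring genuine care is ensuring that all three moment hypotheses, stated uniformly on compacts in $k\epsilon_n$, feed correctly into the Skorokhod-topology convergence: the drift convergence must be uniform rather than pointwise, and the quadratic-variation and Lindeberg statements must hold along any compact time window. Both follow from the uniform-in-compacts form of the hypotheses, and the rest is exactly the content of Corollary 7.4.2 of \cite{EK} specialized to a deterministic drift coefficient $f'$ and a deterministic diffusion coefficient $g$.
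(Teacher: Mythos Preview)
The paper does not give a proof of this proposition at all; it simply states it as ``a simple modification of Theorem 7.4.1 and Corollary 7.4.2 in \cite{EK}'' and uses it as a black box. Your sketch is a correct and standard way to flesh out exactly that citation: the drift/martingale decomposition, the Riemann-sum argument for the drift, and the quadratic-variation plus Lindeberg verification for the martingale part are precisely what the Ethier--Kurtz machinery requires, so your proposal is consistent with (and more detailed than) what the paper does.
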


\noindent We take $\epsilon_n=1/m_n$ and apply this to a slightly altered version of $y_{n,1}$ and $y_{n,2}$. Take

\begin{eqnarray*}
\Delta \tilde  y_{n,1,k}&=& \frac{m_n}{cs\tilde c\tilde s} (c^2\tilde s^2+s^2\tilde c^2- S_{n-k}^2\tilde C_{n-k}^2-C_{n-k}^2\tilde S_{n-k}^2)\\
\Delta  \tilde y_{n,2,k}&=&\frac{2m_n}{cs\tilde c\tilde s} (cs\tilde c\tilde s- C_{n-k}S_{n-k} \tilde C_{n-k}\tilde S_{n-k}).
\end{eqnarray*}

\noindent Computation gives
\[ ES_{n-k}^2= \frac{ n_2-k}{n_1+n_2-2k}=s^2+ \frac{s^2-c^2}{n_1+n_2}+\frac{2(n_2-n_1)}{(n_1+n_2-2\ell)^3}k^2 \]
for some $\ell \in [0,k]$.  Similar expansions can be written for the other terms involved.  We now note that for convergence on compact subsets it is sufficient to consider $k\leq c m_n \leq c_2 n^{1/3}$ by remark \ref{mn bound}.  Therefore, collecting terms we find that

\begin{equation}
\label{first-1}
m_n E \Delta \tilde  y_{n,1,k} = \frac{m_n^2}{cs\tilde c\tilde s} \cdot \frac{2(c^2-s^2)(\tilde c^2-\tilde s^2)}{n_1+n_2} \ k + o(1).
\end{equation}

\begin{lemma}
Let $X= C_{n-k}^i S_{n-k}^j$ with $i$ and $j$ positive integers, then
\[E X= \sqrt{EX^2}- \frac{1}{8 (EX^2)^{3/2}} \var (X^2)+ O\left( \frac{1}{(n_1+n_2)^2}\right).\]
Similar statements hold for $\tilde C_{n-k}$ and $\tilde S_{n-k}$.
\end{lemma}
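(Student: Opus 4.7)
My plan is to use the nonnegativity $X = C_{n-k}^i S_{n-k}^j \ge 0$ (since the $C_{n-k}, S_{n-k}$ are square roots of Beta variables), so that $EX = E\sqrt{X^2}$, and then to Taylor expand the square root around $t_0 = EX^2$.

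Concretely, from
\[
\sqrt{t} = \sqrt{t_0} + \frac{1}{2\sqrt{t_0}}(t-t_0) - \frac{1}{8 t_0^{3/2}}(t-t_0)^2 + R(t,t_0),
\]
the linear term vanishes under $E$, the quadratic term yields exactly $-\frac{1}{8(EX^2)^{3/2}}\var(X^2)$, and the whole problem reduces to showing $E|R(X^2,EX^2)| = O((n_1+n_2)^{-2})$. Locally near $t_0$ one has the standard estimate $|R(t,t_0)| \le C|t-t_0|^3/t_0^{5/2}$, so it will be enough to control the third central moment $E(X^2 - EX^2)^3 = O((n_1+n_2)^{-2})$, together with an argument handling the unlikely event that $X^2$ is far from $t_0$.

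For the central-moment bound I would exploit that $X^2 = C_{n-k}^{2i}(1-C_{n-k}^2)^j$ is a polynomial of fixed degree in a single Beta random variable $U = C_{n-k}^2$ whose shape parameters are of order $N := n_1+n_2$. For such $U$, the $p$-th central moment is of order $N^{-\lceil p/2\rceil}$; this propagates to $X^2$ by writing $X^2 - EX^2 = (U - EU)\,Q_1(U) + \text{lower-order corrections}$ with $Q_1$ a bounded polynomial, giving $E(X^2 - EX^2)^p = O(N^{-\lceil p/2\rceil})$. The same argument, with the appropriate Beta parameters from Theorem \ref{tridiagonal}, handles $\tilde C_{n-k}$ and $\tilde S_{n-k}$. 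To deal with the region where $|X^2 - EX^2|$ is large, I would use that $\sqrt{X^2}\in[0,1]$ is bounded and that Beta variables with shape parameters of order $N$ have sub-Gaussian concentration, so the tail contribution is exponentially small.

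The main obstacle is ensuring that all constants in the Taylor remainder are uniform in $n$ and $k$, which requires $EX^2$ to be bounded below independently of the parameters. This is precisely the situation we are in under the soft-edge hypothesis $\liminf n_2/n > 1$ (together with the analogous lower bound on $n_1/n$): both $c^2 = n_1/(n_1+n_2)$ and $s^2 = n_2/(n_1+n_2)$ lie in a compact subset of $(0,1)$, so $EX^2 = E\,C_{n-k}^{2i}(1-C_{n-k}^2)^j \asymp 1$ uniformly, and the Taylor bound is uniform as required.
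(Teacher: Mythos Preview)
Your proposal is correct and follows essentially the same route as the paper: Taylor expand $\sqrt{t}$ at $t_0=EX^2$, observe that the linear term vanishes and the quadratic term gives $-\tfrac{1}{8}(EX^2)^{-3/2}\var(X^2)$, and reduce the remainder to the bound $E(X^2-EX^2)^3=O((n_1+n_2)^{-2})$. The only streamlining in the paper is that it writes down global two-sided inequalities for $\sqrt{x}$ on $[0,1]$ (splitting into $x\ge a$ and $x\le a$) so that the remainder is controlled by $|X^2-EX^2|^3/(16(EX^2)^{5/2})$ everywhere, which makes your separate concentration/tail argument unnecessary; the third central moment is then bounded by direct computation from the explicit Beta moment formula $E[Y^i(1-Y)^j]$.
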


\noindent Here we take $O(1/(n_1+n_2)^2)$ to mean that there exists a constant $C$ depending only on $i$ and $j$ so that the magnitude of the error is bounded about by $C/(n_1+n_2)^2$.  This lemma will always be used in the event that $C_{n-k}$ or $S_{n-k}$ is raised to an odd power.

\begin{proof}
For $a<x\in [0,1]$
\[ \sqrt{a} + \frac{1}{2\sqrt{a}}(x-a) - \frac{1}{8 a^{3/2}}(x-a)^2 \leq \sqrt{x} \leq \sqrt{a} + \frac{1}{2\sqrt{a}}(x-a) - \frac{1}{8 a^{3/2}}(x-a)^2
 + \frac{ (x-a)^3}{16a^{5/2}},\]
 and for $x\leq a\in [0,1]$
 \[\sqrt{a} + \frac{1}{2\sqrt{a}}(x-a) - \frac{1}{8 a^{3/2}}(x-a)^2
 + \frac{ (x-a)^3}{16a^{5/2}} \leq \sqrt x \leq \sqrt{a} + \frac{1}{2\sqrt{a}}(x-a) - \frac{1}{8 a^{3/2}}(x-a)^2.\]
   To complete the proof what remains to be shown is that $E(X^2-EX^2)^3=O(1/(n_1+n_2)^2)$.  
To accomplish this we make the following observation.  If $Y\sim \text{Beta}(a,b)$, then for $i,j$ positive integers we have that

\begin{equation}
\label{beta ex}
E(Y^i(1-Y)^j)= \frac{(a+i-1)\cdots (a+1)a \, (b+j-1) \cdots (b+1)b}{(a+b+i+j-1)\cdots (a+b+1)(a+b)}
\end{equation}
direct computation then finishes the proof.
\end{proof}

\noindent An application of this lemma shows that

\begin{equation}
m_n E\Delta \tilde y_{n,2,k}= \frac{m_n^2}{cs\tilde c\tilde s} \cdot \frac{\tilde c^2 \tilde s^2(c^2-s^2)^2+c^2s^2 (\tilde c^2-\tilde s^2)^2}{ n_1+n_2}k +O\left(\frac{m_n^2}{n_1+n_2}\right)
\end{equation}
Here we mean that for $1\leq k \leq  a m_n$ there exists some constant $C$ independent of $k, n, n_1,n_2$ so that the error is bounded by $Cm_n^2/(n_1+n_2)$. 
\noindent Together with (\ref{first-1}) this gives us that

\begin{equation}
m_nE(\Delta \tilde y_{n,1,k} + \Delta \tilde y_{n,2,k})= \frac{k}{m_n}+o(1)
\end{equation}

\noindent which verifies the first condition in Proposition \ref{kurtz} with $f(x)=x^2/2$.

\noindent The second and fourth moment computations can be done similarly.  They give us that 

\begin{equation}
m_nE(\Delta \tilde y_{n,1,k} + \Delta \tilde y_{n,2,k})^2= \frac{4}{\beta}+o(1),  \ \ \ \text{and } \ \ \ 
m_nE(\Delta \tilde y_{n,1,k} + \Delta \tilde y_{n,2,k})^4= o(1).
\end{equation}

\noindent Note that, for the fourth moment, it is sufficient to bound $m_nE(\Delta \tilde y_{n,2,k})^4$ and (breaking $\tilde y_{n,1,k}$ into pieces) to bound the corresponding moments of $s^2\tilde c^2- S_{n-k}^2\tilde C_{n-k}^2$ and $c^2\tilde s^2- C_{n-k}^2 \tilde S_{n-k}^2$.  Therefore, Proposition \ref{kurtz} can be applied and yields
\[ \tilde y_{n,1,k}+\tilde y_{n,2,k} \Rightarrow \frac{x^2}{2}+ \frac{2}{\sqrt{\beta}} b(x),\]
in law in the Skorohod topology.

\noindent Recall that in all the above computations, we considered $\tilde y$ instead of $y$. Consequently, all that remains to check the second part of Assumption 1 is to show that $y_{n,1,k}+y_{n,2,k}- \tilde y_{n,1,k}- \tilde y_{n,2,k}$ converges to the $0$ process in law in the Skorohod topology.  One can check that the expectation and variance of the increments are of order $1/n$ and so the expectation and variance of the process go to $0$ on compact subsets.  This together with a fourth moment bound gives us convergence to the $0$ process in law.

Finally, the individual tightness of each $(y_{n,i}(x);x\geq 0)$, $i=1,\, 2$ can be obtained along the same lines.


\subsection{Checking Assumption 2}

To check this assumption we again work with the shifted processes $\tilde y_{n,i,k}$ and compare this with the original process.  To this end we take $\eta_{n,i,k}= m_n E\Delta y_{n,i,k}$, and similarly $\tilde \eta_{n,i,k}= m_n E\Delta \tilde y_{n,i,k}$.  Further, we will neglect the extra $-1$ found in the $\tilde C_{n-k}$ and $\tilde S_{n-k}$ terms and show the irrelevance later.  We will show the inequality in (\ref{tightness1}) for $\bar \eta (x)=x$.  Under these definitions we have

\begin{align*}
\tilde \eta_{n,1,k} &=  \frac{m_n^2}{cs \tilde c \tilde s} \left( c^2\tilde s^2 + s^2 \tilde c^2 - \frac{(n_1-k)(n_1+n_2-n-k)}{(n_1+n_2-2k)^2} - \frac{(n_2-k)(n-k)}{(n_1+n_2-2k)^2} \right)\\
\tilde \eta_{n,2,k} &=\frac{2m_n^2}{cs \tilde c \tilde s} \left( cs\tilde c\tilde s - \frac{f (\frac{\beta}{2}(n_1-k))f(\frac{\beta}{2}(n_2-k))f(\frac{\beta}{2}(n-k))f( \frac{\beta}{2} (n_1+n_2-n-k))}{\frac{\beta^2}{4}(n_1+n_2-2k)^2 }\right)
\end{align*}
where $f(x)=\Gamma(x+1/2)/\Gamma(x)$.

We begin with the upper bound on $\tilde \eta_{n,1,k}+\tilde \eta_{n,2,k}$.  The general idea will be to treat $\tilde \eta_{n,1,k}$ via a Taylor expansion with a uniform bound on the error term, and work with  $\tilde \eta_{n,2,k}$ in two regions.  The first region will be $1\leq k \leq \alpha n$ for some $\alpha<1$ and the second will be $\alpha n \leq k \leq n-1$.  On the first region we will again work with primarily with Taylor expansion and find a uniform bound on the error in terms of $\alpha$, for the second section we return to working with the original $\tilde \eta_{n,2,k}$ and show that for $\alpha n \leq k \leq n-1$ this can be bounded by $C_\alpha k/m_n$.

Using the inequality
\[  \sqrt{x}\left( 1- \frac{2}{x}\right) \leq \frac{ \Gamma(x+1/2)}{\Gamma(x)} \leq \sqrt{x} \] 
we get that for $n_1, n_2 >n>k$
\begin{align*}
\tilde \eta_{n,2,k} \leq  \frac{2m_n^2}{cs \tilde c \tilde s}& \left( cs\tilde c\tilde s - \frac{ \sqrt{(n_1-k)(n_2-k)(n-k)(n_1+n_2-n-k)}}{(n_1+n_2-2k)^2}\right)\\
&\ \ \ + 15 \frac{4m_n^2}{cs \tilde c \tilde s} \left( \frac{ \sqrt{(n_1-k)(n_2-k)(n-k)(n_1+n_2-n-k)}}{\frac{\beta}{2}(n-k)(n_1+n_2-2k)^2}\right).
\end{align*}
For convenience we will label the first line of the right hand side by $A_k$ and the second line by $B_k$.  We will treat the second term $B_k$ first.
\[ \frac{m_n B_k}{ k} \leq \frac{120}{\beta}\frac{n_1 n_2 n(n_1+n_2-n)(n_1+n_2)^3}{n(n_1+n_2-2n)^2 (\sqrt{n (n_1+n_2-n)}(n_1-n_2)+\sqrt{n_1n_2}(2n-n_1-n_2))^2}\]
This upper bound has a finite limsup.  We now turn to $\tilde \eta_{n,1,k}+A_k$ for $1 \leq k \leq \alpha n$ for some $0<\alpha<1$.
\begin{align*}
\tilde \eta_{n,1,k}+A_k& = \frac{k}{m_n} +\frac{ f(\ell)}{2}k^2
\end{align*}
for some $\ell \in [1,\alpha n]$.  Here $f(\ell)$ is the second derivative of $\tilde \eta_{n,1,k} +A_k$ with respect to $k$.
On this range of $k$ we can find explicit upper bounds for 
\[m_n k  \frac{f(\ell)}{2}\]
 in terms of $n, n_1, n_2,$ and $\alpha$ which have finite limsup as $n$ goes to $\infty$.  This together with our bound on $B_k m_n/k$ is enough to give us a sequence of constants $c_n$ so that 
\[ \tilde \eta_{n,1,k} + \tilde \eta_{n,2,k} \leq c_n \frac{k}{m_n}\]
for $1\leq k \leq \alpha n$.  For the remaining piece we work with $\tilde \eta_{n,1,k}$ and  $\tilde \eta_{n,2,k}$ separately.  
\begin{align*}
\frac{m_n \tilde \eta_{n,1,k}}{k} &= \frac{m_n^3}{cs \tilde c \tilde s} \left( \frac{2(n_1-n_2)(2n-n_1-n_2)}{(n_1+n_2)^3}+\frac{6(n_1-n_2)(2n-n_1-n_2)k}{(n_1+n_2-2\ell)^4}\right)
\end{align*}
for some $1\leq \ell \leq k \leq n$.  Taking the obvious upper bound with $\ell = k=n$ we have an upper bound on $\tilde \eta_{n,1,k}$ for $1\leq k \leq n$ which has a finite limsup.  Returning to $A_k$ we note that for $\alpha n \leq k \leq n$ we have that
\[ \frac{m_n \tilde \eta_{n,2,k}}{k} \leq  \frac{m_n^3}{\alpha n}\]
From Remark \ref{mn bound} we have that this is an appropriate upper bound.  Choosing the larger upper bound on the two sections gives us a sequence $\mu_n$ such that
\[ \tilde \eta_{n,1,k} + \tilde \eta_{n,2,k} \leq \mu_n \frac{k}{m_n}\]
for $1 \leq k \leq n-1$.

Turning to the lower bound we have the inequality
\begin{align} 
\tilde \eta_{n,1,k}+ \tilde \eta_{n,2,k} \geq & \frac{m_n^2}{cs\tilde c\tilde s}\left(c^2\tilde s^2+s^2\tilde c^2-\frac{(n_1-k)(n_1+n_2-n-k)}{(n_1+n_2-2k)^2}-\frac{(n_2-k)(n-k)}{(n_1+n_2-2k)^2}\right)\notag \\
 & \ \ \ +\frac{2m_n^2}{cs\tilde c\tilde s}\left(cs\tilde c\tilde s- \frac{ \sqrt{(n_1-k)(n_2-k)(n-k)(n_1+n_2-n-k)}}{(n_1+n_2-2k)^2}\right).
 \label{lower}
 \end{align}
One can then make arguments for $1\leq k \leq \alpha n$ similar to those employed in the upper bound, simply choose $\alpha$ small enough so that $m_n k  f(\ell)/2$ is bounded below by $-1$.  For  the remaining $k$ we note that the derivative of the right hand side with respect to $k$ is
\[\frac{m_n^2}{cs\tilde c\tilde s} \left[ \frac{\sqrt{(n-k)(n_1+n_2-n-k)}(n_1-n_2)+\sqrt{(n_1-k)(n_2-k)}(2n-n_1-n_2)}{\sqrt{(n_1-k)(n_2-k)(n-k)(n_1+n_2-n-k) (n_1+n_2)}}\right]^2\]
which is strictly greater then $0$ for $1\leq k \leq n-1$.  Therefore 
\begin{align*} 
\tilde \eta_{n,1,k}+ &\tilde \eta_{n,2,k} \geq \\
& \frac{m_n^2}{cs\tilde c\tilde s}\left(c^2\tilde s^2+s^2\tilde c^2-\frac{(n_1-\alpha n)(n_1+n_2-n-\alpha n)}{(n_1+n_2-2\alpha n)^2}-\frac{(n_2-\alpha n)(n-\alpha n)}{(n_1+n_2-2\alpha n)^2}\right) \\
 & \ \ \ \ \  +\frac{2m_n^2}{cs\tilde c\tilde s}\left(cs\tilde c\tilde s- \frac{ \sqrt{(n_1-\alpha n)(n_2-\alpha n)(n-\alpha n)(n_1+n_2-n-\alpha n)}}{(n_1+n_2-2\alpha n)^2}\right).
 \end{align*}
This lower bound can be used to get the desired constants for $\alpha n \leq k \leq n-1$ finishing the lower bound.  To finish we make the following observation:  By direct computation we can find an upper bound $\delta_n$ of order $(n_1+n_2)^{-1/3}$ such that
\[|\tilde \eta_{n,1,k}+\tilde \eta_{n,2,k} - \eta_{n,1,k}- \eta_{n,2,k}|\leq \delta_n.\]
Therefore any error that comes from neglecting the $-1$ in the $\tilde C_{n-k}$ and $\tilde S_{n-k}$ terms, or working with the shifted process may be absorbed into the constant terms.  This finishes the proof of the bound in (\ref{tightness1}).

 \noindent We now verify (\ref{tightness2}):
 \[ \Delta y_{n,2,k} = \frac{2m_n}{cs\tilde c\tilde s}(cs\tilde c\tilde s- C_{n-k}S_{n-k}\tilde C_{n-k}\tilde S_{n-k-1})= 2m_n - \frac{2m_n C_{n-k}S_{n-k}\tilde C_{n-k}\tilde S_{n-k-1}}{cs\tilde c\tilde s}. \]
Therefore since all of the relevant pieces are positive we have that 
\[ m_n E\Delta y_{n,2,k} = 2m_n^2- E\frac{2m_n^2 C_{n-k}S_{n-k}\tilde C_{n-k}\tilde S_{n-k-1}}{cs\tilde c\tilde s}\leq 2m_n^2.\]

\noindent The proof of the oscillation bound (\ref{oscillation}) is identical to the corresponding proof in \cite{RRV} and follows from general martingale arguments. This ends the proof of the convergence at the soft edge.



\section{Convergence of the spectrum at the hard edge}\label{he}

To show the convergence of the spectrum at the hard edge we will first state a more general theorem on convergence of eigenvalues.  We will then show that this gives us Theorem \ref{HEL}.  Though this is stated more generally then the result given by Ram\'irez and Rider in \cite{RR} the majority of the proof follows directly from their work.

\subsection{A more general setting}

Let $X_n$ be a lower bidiagonal matrix 
\[ X_n = \left[ \begin{array}{ccccc} a_1 &&&& \\
-b_1 & a_2 &&& \\
& -b_2 & a_3 & & \\
&& \ddots & \ddots & \\
&&& -b_{n-1} & a_n \end{array} \right]\]
We make the following assumptions on the entries:

\noindent{\it Assumption 1}
There is a brownian motion $B(\cdot)$, and functions $r(x)$, $s(x)$ and $\varphi(x)$ continuous on $(0,1)$ such that for $y<x$ in $(0,1]$
\begin{align}
\frac{n}{a_{\lfloor nx \rfloor} }& \Rightarrow r(x) \\
\sum_{k=\lfloor ny \rfloor}^{\lfloor nx \rfloor}  \log \left( \frac{b_k}{a_k}\right) & \Rightarrow s(x)-s(y) + \int_y^x \varphi (t) dB_t .
\end{align}

.

\smallskip

\noindent{\it Assumption 2}  There exist tight random constants $\kappa_n$ and $\kappa_n'$ such that 
\begin{align}
\label{A2mainterm}
\sup_{1\leq k \leq n} \frac{E a_k}{a_k} & \leq \kappa_n\\
\label{A2process}
\sum_{k=j}^{i-1} \log  \left( \frac{b_k}{a_k}\right) - s(i/n)+s( j/n) & \leq \kappa_n' ( 1+ T^{3/4}(i/n)+T^{3/4}(j/n))
\end{align}
Where $T(x) =  \int_{1/2}^x \varphi^2(t) dt$.

\smallskip

\noindent{\it Assumption 3}  Let $K_C$ to be the integral operator with kernel
\[ k_C(x,y) = C r(x) e^{s(x)-s(y)} \exp \left[ C T^{3/4}(x)+ CT^{3/4}(y)\right] \ind(y<x),\]
then $K_C$ is Hilbert-Schmidt.

\smallskip

\noindent{\it Assumption 4} Let $K$ be the integral operator  with kernel given by 
\[k(x,y) = r(x) e^{s(x)-s(y)} \exp \left[ \int_y^x \varphi(t)dB_t\right] \ind (y<x).\]
The operator $(KK^T)^{-1}$ has discrete spectrum with simple eigenvalues $0<\Lambda_0< \Lambda_1< ... \uparrow \infty$.

\medskip

\begin{theorem}
\label{inverselimit}
Let $X_n$ be a lower bidiagonal matrix with entries that satisfy assumptions 1, 2, 3 and 4.
Denote the ordered eigenvalues of $X_nX_n^T$ by $\lambda_0< \lambda<1< ...$, then 
\[ \left( \lambda_0, \lambda_1,..., \lambda_{k-1}\right) \Rightarrow \left( \Lambda_0, \Lambda_1,...,\Lambda_{k-1}\right)\]
jointly in law in the Skorokhod topology for any fixed $k$ as $n\to \infty$.

\end{theorem}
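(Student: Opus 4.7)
The plan is to follow the inverse-operator strategy of Ram\'irez and Rider \cite{RR}. Since $X_n$ is lower-triangular with positive diagonal (implicit in Assumption 1), it is invertible, and the bottom $k$ eigenvalues of $X_nX_n^T$ are the reciprocals of the top $k$ eigenvalues of $X_n^{-1}X_n^{-T}$. Hence it suffices to show that the top $k$ eigenvalues of $X_n^{-1}X_n^{-T}$ converge jointly in law to the top $k$ eigenvalues of the compact operator $KK^T$ on $L^2([0,1])$, which by Assumption 4 are precisely $\Lambda_0^{-1}>\Lambda_1^{-1}>\cdots$.

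First I invert $X_n$ explicitly; a direct recursion in the bidiagonal structure gives
\[
(X_n^{-1})_{ij}=\frac{1}{a_i}\prod_{k=j}^{i-1}\frac{b_k}{a_k}\, \ind(j\leq i).
\]
Via the standard isometric step-function embedding $\R^n \hookrightarrow L^2([0,1])$, I identify $X_n^{-1}$ with an integral operator $K_n$ on $L^2([0,1])$ of (piecewise-constant) kernel
\[
k_n(x,y)=\frac{n}{a_{\lfloor nx\rfloor}}\exp\!\left(\sum_{k=\lfloor ny\rfloor}^{\lfloor nx\rfloor-1}\log\frac{b_k}{a_k}\right)\ind(y<x),
\]
so that the spectrum of $K_nK_n^T$ coincides with $\{\lambda_i^{-1}\}$. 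The goal is then to prove $\|K_n - K\|_{\rm HS}\to 0$ in probability. Invoking Skorokhod's representation on Assumption 1, I realize the weak convergences as almost sure convergences on a common probability space, so that $k_n(x,y)\to k(x,y)$ pointwise for almost every $y<x$. Assumption 2 upgrades this to $L^2$-convergence: bound \eqref{A2mainterm} dominates $n/a_{\lfloor nx\rfloor}$ by a tight constant multiple of $r(x)$, while \eqref{A2process} dominates the exponential by $\exp(s(x)-s(y))\exp(\kappa_n'(1+T^{3/4}(x)+T^{3/4}(y)))$. For a tight random constant $C$ absorbing both factors, this yields $|k_n(x,y)|\le k_C(x,y)$, and $k_C\in L^2([0,1]^2)$ by Assumption 3; dominated convergence then gives $\|K_n-K\|_{\rm HS}\to 0$ almost surely along the coupling.

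Convergence in Hilbert--Schmidt norm forces convergence of the entire sequence of singular values, and therefore of the eigenvalues of $K_nK_n^T$ to those of $KK^T$, by the standard Weyl-type minimax characterization for compact selfadjoint operators. Under Assumption 4 the latter are simple and accumulate only at $0$, so the top $k$ eigenvalues are well separated and the joint convergence in law passes to their reciprocals; this yields the claimed joint convergence of $(\lambda_0,\ldots,\lambda_{k-1})$ to $(\Lambda_0,\ldots,\Lambda_{k-1})$.

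The hard part will be the uniform domination underlying the dominated-convergence step, specifically the need for \eqref{A2process} to control \emph{all} partial sums $\sum_{k=j}^{i-1}\log(b_k/a_k)-s(i/n)+s(j/n)$ simultaneously by a single tight random constant $\kappa_n'$. This is a maximal-type inequality that, in applications such as the $\beta$-Jacobi setting of Theorem \ref{HEL}, will be verified via Doob-type martingale estimates on the logarithms of the relevant beta random variables. Once this uniform envelope is secured, the remainder of the argument is the standard Hilbert--Schmidt continuity of the spectrum, essentially as in \cite{RR}.
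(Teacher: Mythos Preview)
Your proposal is correct and follows essentially the same approach as the paper: invert $X_n$ explicitly, embed $X_n^{-1}$ as an integral operator $K_n$ on $L^2([0,1])$, couple via Skorokhod representation to get pointwise kernel convergence, use Assumptions 2 and 3 to dominate $|k_n|$ by the Hilbert--Schmidt kernel $k_C$, and conclude eigenvalue convergence from Hilbert--Schmidt convergence exactly as in \cite{RR}. The paper packages the Hilbert--Schmidt convergence as a subsequence statement (Lemma \ref{operator convergence}) and defers the details to Lemma 6 of \cite{RR}, but your outline is precisely that argument spelled out.
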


\begin{remark}
Assumption 4 can be replaced by further conditions on $r(x),s(x)$ and $\varphi(x)$ in assumption 1.  We omit this here because in our case assumption 4 can be checked directly.
\end{remark}

\begin{proof}
We begin by computing $X_n^{-1}$ and embedding the resulting matrix as an operator on $L^2(0,1]$.  By Lemma 4 in \cite{RR} we can compute 
\[[X_n^{-1}]_{i,j} = \frac{1}{a_i} \prod_{k=j}^{i-1} \frac{b_k}{a_k} \hspace{.5cm} \text{ for } \  j\leq i.\]
The action of our operator on $L^2$ will then read
\[\left( X_n^{-1} f \right) (x) = \frac{n}{a_{\lfloor nx \rfloor}} \sum_{j=1}^{\lfloor nx\rfloor}  \prod_{k=j}^{\lfloor nx \rfloor -1} \frac{ b_k}{a_k} \int_{x_{j-1}}^{x_j} f(x) dx\]
where $x_k=k/n$.   We denote this operator by $K^n$, and note that this is a discrete integral operator with kernel 
\[k^n(x,y)= \frac{1}{a_i} \exp \left[ \sum_{k=1}^{i-1} \log \left( \frac{b_k}{a_k}\right) \right] \ind_L(x,y),\]
where $\ind_L(x,y)= \ind(x\in [x_{i-1},x_i)) \ind (y\in [x_{j-1},x_j))$ and $i>j$.  

To complete the proof we need the following lemmas:

\begin{lemma}
\label{operator convergence}
There exists a probability space on which all $K^n$ and $K$ are defined, and such that any sequence of the operators $K^n$ contains a subsequence which converges to $K$ in Hilbert-Schmidt norm with probability one.  In particular for any $n_k \uparrow \infty$ we can find a subsequence $n_{k'}  \uparrow \infty$ along which
$$\lim_{n_{k'}\to \infty} \int_0^1 \int_0^1 |k^{n_{k'}}(x,y)(\omega)- k(x,y)(\omega)|^2 dx dy=0$$
almost surely.
\end{lemma}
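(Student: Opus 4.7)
The plan is to use the Skorokhod representation theorem to couple the processes onto a single probability space so that the distributional convergences in Assumption~1 become pathwise a.s.\ convergences along subsequences, then upgrade the resulting pointwise convergence of the integral kernels to Hilbert--Schmidt norm convergence of the operators by dominated convergence with an envelope supplied by Assumptions~2 and~3.

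First, interpreting Assumption~1 as joint convergence in law of the processes $(n/a_{\lfloor n\cdot\rfloor})$ and $(\sum_{k\le\lfloor n\cdot\rfloor}\log(b_k/a_k))$, combined with the tightness of the random constants $\kappa_n,\kappa_n'$ from Assumption~2, any subsequence $n_k$ admits by Prohorov a further subsequence $n_{k'}$ along which the joint distribution converges. Skorokhod representation then gives a common probability space on which, almost surely, $n/a_{\lfloor nx\rfloor}\to r(x)$ for a.e.\ $x\in(0,1]$ and $\sum_{k=\lfloor ny\rfloor}^{\lfloor nx\rfloor}\log(b_k/a_k)\to s(x)-s(y)+\int_y^x\varphi(t)\,dB_t$ for a.e.\ $0<y<x\le 1$; moreover, on an event $E_\eps$ of probability at least $1-\eps$, both $\kappa_{n_{k'}}$ and $\kappa'_{n_{k'}}$ are bounded by a deterministic constant $C=C(\eps)$. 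Combining these, $k^{n_{k'}}(x,y)\to k(x,y)$ pointwise for Lebesgue a.e.\ $(x,y)$ in the triangle $\{y<x\}$.

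Second, I would construct an $L^2$-dominating envelope for $|k^n|$: using $1/a_i\le\kappa_n/(Ea_i)$ from (\ref{A2mainterm}) and the exponential-sum bound (\ref{A2process}), on the event $E_\eps$
\begin{equation*}
|k^n(x,y)|\;\le\;\frac{Cn}{Ea_{\lfloor nx\rfloor}}\,e^{s(x)-s(y)}\exp\bigl[C(1+T^{3/4}(x)+T^{3/4}(y))\bigr]\mathbf{1}(y<x).
\end{equation*}
Provided $n/Ea_{\lfloor nx\rfloor}$ is uniformly bounded in $n$ (an implicit consequence of $(\ref{A2mainterm})$ and Assumption~1, yielding $Ea_{\lfloor nx\rfloor}\asymp n/r(x)$), after increasing $C$ the right-hand side is bounded by $k_{C'}(x,y)$ from Assumption~3, which is Hilbert--Schmidt and hence square-integrable on $(0,1]^2$; the limit kernel $k$ admits the same envelope. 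Dominated convergence then yields
\begin{equation*}
\|K^{n_{k'}}-K\|_{\mathrm{HS}}^2\;=\;\iint|k^{n_{k'}}(x,y)-k(x,y)|^2\,dx\,dy\;\longrightarrow\;0
\end{equation*}
almost surely on $E_\eps$. A diagonal extraction over $\eps\downarrow 0$ produces a further subsequence $n_{k''}$ along which the Hilbert--Schmidt convergence holds a.s.\ on the full coupled space.

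The main obstacle will be the envelope step: one must verify that the deterministic bound derived from Assumption~2 genuinely fits inside the Hilbert--Schmidt envelope $k_C$ of Assumption~3, which requires controlling the possible blow-ups of $r$, $s$, and $\varphi$ near the singular endpoint $x=0$, and matching the $n$-scaling of $Ea_{\lfloor nx\rfloor}$ with $r(x)$. A secondary subtlety is that Assumption~1 must be read as joint process-level convergence, so that the limiting Brownian motion $B$ is shared on the coupled space between the two limits, rather than merely as finite-dimensional convergence at each fixed $(x,y)$.
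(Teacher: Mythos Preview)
Your proposal is essentially correct and matches the paper's approach: the paper's own proof of this lemma is two sentences long, deferring the entire Skorokhod--plus--dominated-convergence argument to the proof of Lemma~6 in \cite{RR}, which is exactly the scheme you outline. The one ingredient the paper adds, and which you gloss over, is the time-change identity
\[
\int_y^x \varphi(z)\,db_z \;\stackrel{d}{=}\; \tilde b\bigl(T(x)\bigr)-\tilde b\bigl(T(y)\bigr),
\]
with $T(x)=\int_{1/2}^x\varphi^2(t)\,dt$. This is precisely what justifies your unproven claim that ``the limit kernel $k$ admits the same envelope'': once the stochastic integral is written as a Brownian increment in the time scale $T$, a law-of-the-iterated-logarithm bound gives $|\tilde b(T(x))|\le C(1+T^{3/4}(x))$ almost surely, so that $|k(x,y)|\le k_{C}(x,y)$ and $K$ is a.s.\ Hilbert--Schmidt by Assumption~3. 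The ``main obstacle'' you flag---matching $n/Ea_{\lfloor nx\rfloor}$ to $r(x)$ uniformly---is indeed not covered by the general assumptions as stated and is handled in the paper only at the level of the specific $\beta$-Jacobi application; you are right to isolate it.
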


The remainder of the proof of Theorem \ref{inverselimit} now follows from the proof of Theorem 1 in \cite{RR}.

\end{proof}

\begin{proof}[Proof of Lemma \ref{operator convergence}]
We make the following observation:  The noise term in the limiting process may be rewritten as 
\begin{align*}
\int_x^y \varphi(z) db_z = \tilde b \left(T(x)  \right) -\tilde b \left( T(y) \right).
\end{align*}
Here the equality is in distribution with a different Brownian motion $\tilde b$ living on the same probability space. This can be used to show that limiting operator $K$ is almost surely Hilbert-Schmidt.  The remainder of the proof then follows from the proof of Lemma 6 in \cite{RR}.

\end{proof}


\subsection{Proof of theorem \ref{HEL}}
\label{HEproof}

We now apply Theorem \ref{inverselimit} to our setting.  Recall the tridiagonal ensemble of random matrices which spectrum corresponds to the $\beta$-Jacobi ensemble. We look at a similar matrix with unchanged spectrum, which can be decomposed into a product of two bidiagonal matrices. Specifically we take

\[W_{n,\beta}= \left[ \begin{array}{ccccc} C_1 \tilde S_1 & &&& \\
-S_2\tilde C_1& C_2 \tilde S_2 &&&\\
& -S_3 \tilde C_2 & \ddots && \\
&& \ddots && \\
&&& -S_n \tilde C_{n-1} & C_n \end{array}\right],\]
where the random variables $C_k$ and $\tilde C_k$ are again distributed as in Section \ref{intro-tri}.  Clearly $\sigma(W_{n,\beta} W_{n,\beta}^T)= \sigma(M_{n,\beta} M_{n,\beta}^T)$.  Recall that we are looking for the hard edge limit at the lower edge, therefore we will take $n_1=n+a_n, a_n \to a\in(-1,\infty)$ with no restriction on $n_2$ beyond $n_2\geq n$. 

We will apply Theorem \ref{inverselimit} to the bidiagonal matrix $\sqrt{m_n} W_{n,\beta}$ where $m_n =n n_2$,  and then show that the eigenvalue equation $\varphi = \lambda (KK^T)^{-1} \varphi$ is equivalent to $\psi = \lambda \mathfrak{G}_{\beta,a}^{-1} \psi$ completing the proof of Theorem \ref{HEL}.

In order to apply Theorem \ref{inverselimit} we need to show that $\sqrt{m_n} W_{n,\beta}$ satisfies the assumptions.  All these assumptions will need to be verified in two cases.  The first case will be when $n_2/n \to \gamma \in [1,\infty)$, and the second will be $n_2 \gg n$.  Note that these two cases are sufficient because in the general case for any subsequence we can find a further subsequence along which either $n_2/n\to \gamma$ or $n_2\gg n$ and so convergence of to the eigenvalues of $\mathfrak{G}_{\beta,a}$ in those situations is sufficient. For clarity we note that the discrete kernel of $K^n$ is 
\[k^n_{\beta}(x,y)= \frac{n}{\sqrt{m_n} C_i \tilde S_i} \exp \left[ \sum_{k=j}^{i-1} \log \left(\frac{S_{k+1}\tilde C_k}{ C_{k} \tilde S_k}\right)\right]\ind_L(x,y),\]
with $\ind_L(x,y)= \ind_{x\in [x_{i-1},x_i)} \ind_{y\in [x_{j-1},x_j)}$.

\subsubsection{Verifying Assumption 1}

\begin{lemma}
\label{mainterm}
Assume that $n_2/n \to \gamma \in [1,\infty)$, then for $x\in (0,1]$
\[ \frac{n}{\sqrt{m_n} C_{\lfloor nx\rfloor} \tilde S_{\lfloor nx \rfloor}} \Rightarrow  \frac{2x+\gamma-1}{\sqrt \gamma \sqrt{x(x+\gamma-1)}}.\]
Assume that $n_2\gg n$ then for $x\in (0,1]$
\[ \frac{n}{\sqrt{m_n} C_{\lfloor nx\rfloor} \tilde S_{\lfloor nx \rfloor}} \Rightarrow  \frac{1}{\sqrt x}.\]
Both convergence statements hold in the Skorokhod topology.
\end{lemma}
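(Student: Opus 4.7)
The plan is to exploit concentration of the Beta-distributed entries around explicit means, then upgrade pointwise convergence to Skorokhod convergence using continuity of the limit.

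First, I would write down the exact expectations. Using the mean of a Beta distribution, for $k=\lfloor nx\rfloor$,
\[
E[C_k^2]=\frac{n_1-n+k}{n_1+n_2-2n+2k},\qquad E[\tilde S_k^2]=\frac{n_1+n_2-2n+k+1}{n_1+n_2-2n+2k+1}.
\]
Substituting $n_1=n+a_n$ with $a_n\to a$ and letting $n\to\infty$, in Case 1 these converge to $x/(2x+\gamma-1)$ and $(x+\gamma-1)/(2x+\gamma-1)$, while in Case 2 they give $E[C_k^2]\sim nx/n_2\to 0$ and $E[\tilde S_k^2]\to 1$. A direct algebraic check then shows that $n/(\sqrt{m_n}\sqrt{E[C_k^2]\,E[\tilde S_k^2]})$ converges to the claimed deterministic limit in each case, recalling $m_n=nn_2$.

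Next, I would replace the expectations by the actual random variables using concentration. Since $\mathrm{Var}(\mathrm{Beta}(\alpha_1,\alpha_2))=O(1/(\alpha_1+\alpha_2))$, one has $\mathrm{Var}(C_k^2),\mathrm{Var}(\tilde C_k^2)=O(1/n)$ as all Beta parameters are of order $n$. In Case 1, where $E[C_k^2]$ is bounded away from $0$ uniformly on $x\in[\epsilon,1]$, Chebyshev immediately gives $C_k^2/E[C_k^2]\to 1$ in probability pointwise. In Case 2 the mean itself shrinks, but the relative fluctuation $\mathrm{Var}(C_k^2)/E[C_k^2]^2\sim 2/(\beta n x)\to 0$, so the same conclusion holds. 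Because square root is Lipschitz away from zero, $C_k$ and $\tilde S_k$ also concentrate around $\sqrt{E[C_k^2]}$ and $\sqrt{E[\tilde S_k^2]}$, and taking reciprocals (valid since the means are bounded below away from zero on $[\epsilon,1]$ in Case 1, and since $\sqrt{m_n}\sqrt{E[C_k^2]}\sim n\sqrt x$ is bounded below in Case 2) yields the pointwise limit in probability.

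Finally, because the limiting function is continuous on $(0,1]$, convergence in the Skorokhod topology on each compact subset $[\epsilon,1]$ is equivalent to uniform convergence in probability. To promote pointwise convergence to uniform, I would combine (i) uniform convergence on $[\epsilon,1]$ of the deterministic quantity $k\mapsto n/(\sqrt{m_n}\sqrt{E[C_k^2]E[\tilde S_k^2]})$ to the limiting function (clear from the smooth form of the expectations), with (ii) a uniform-in-$k$ concentration estimate. For (ii) the main obstacle is that Chebyshev alone is too weak to survive a union bound over $\sim n$ indices; I would instead use a finite grid of $O(1/\delta)$ values of $x$ where Chebyshev suffices, together with the smoothness of $k\mapsto E[C_k^2]$ and a modulus-of-continuity argument for the fluctuations, letting $\delta\to 0$. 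Alternatively one can appeal to sub-Gaussian-type tail bounds for Beta distributions with large parameters to get enough room for a direct union bound. Either way, controlling the uniformity in Case 2, where the mean tends to zero, is the most delicate step and will require tracking orders of magnitude carefully.
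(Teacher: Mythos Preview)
Your proposal is correct and shares the same opening move as the paper: compute the Beta expectations, observe $\mathrm{Var}(C_k^2),\mathrm{Var}(\tilde S_k^2)=O(1/n)$, and conclude pointwise $L^2$-convergence to the deterministic limit. The difference lies in the tightness step. You propose either a finite-grid-plus-continuity argument or sub-Gaussian tails with a union bound; the paper instead normalizes each factor by its approximate mean, considering the processes $k\mapsto C_k^{-1}\sqrt{(a+k)/(a+n_2-n+2k)}$ and $k\mapsto \tilde S_k^{-1}\sqrt{(a+n_2-n+k)/(a+n_2-n+2k)}$, which should converge to the constant $1$. It then bounds the second moment of consecutive differences of these normalized processes by $c/n$ uniformly on $[\delta,1]$ and invokes Kolmogorov's tightness criterion directly, afterwards letting $\delta\downarrow 0$. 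The normalization is the key simplification: it removes the drift and the $x$-dependence of the target, so one only has to show tightness of a process converging to a constant, and it simultaneously handles both regimes (including Case~2 where $E[C_k^2]\to 0$) without separate bookkeeping. Your sub-Gaussian route would also work and is arguably more robust, but the paper's increment-moment estimate is shorter and avoids any appeal to Beta tail inequalities.
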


\begin{proof}
We begin by observing that 
$$\var C^2_{nx} =O(1/n) \to 0, \ \ \ \text{ as } \ \ \ n\to \infty.$$
And similarly $\var \tilde S^2_{nx} =O(1/n)$, therefore the random variables converge to their expected values in $L^2$.  Now to ensure convergence on the process level fix $\delta>0$, then for $k>nx$, $x\geq \delta$ we can find a constant $c$ such that  
$$E\left( \frac{1}{C_{k+1}}\cdot \frac{\sqrt{a+k+1}}{\sqrt{a+n_2-n+2k+2}}-\frac{1}{C_k}\cdot \frac{\sqrt{a+k}}{\sqrt{a+n_2-n+2k}}\right)^2\leq \frac{c}{n}$$
and
$$E\left( \frac{1}{\tilde S_{k+1}}\cdot \frac{\sqrt{a+n_2-n+k+1}}{\sqrt{a+n_2-n+2k+2}}-\frac{1}{\tilde S_k}\cdot \frac{\sqrt{a+n_2-n+k}}{\sqrt{a+n_2-n+2k}}\right)^2\leq \frac{c}{n}.$$
Kolmogorov's tightness criterion ensures that we have process convergence on the set $[\delta, 1]$.  Let $\delta \to 0$ to finish the proof.

\end{proof}

\begin{lemma}
\label{process}
Assume that $n_2/n\to \gamma\in [1,\infty)$.  There is a Brownian motion $b(\cdot)$ such that for every $y,x\in (0,1]$ with $y<x$ we have 
\begin{align}
\label{process1}
\sum_{k= \lfloor ny \rfloor}^{\lfloor nx\rfloor -1}  \log  \left(\frac{S_{k+1}\tilde C_k}{ C_{k} \tilde S_k}\right)\Rightarrow \log& \left( \frac{\sqrt{x(\gamma-1+2y)}}{\sqrt{y(\gamma-1+2x)}}\right) + \frac{1+a}{2} \log \left( \frac{y}{x} \right)\\
& \ \ \  \notag + \frac{a}{2} \log \left( \frac{y+\gamma-1}{x+\gamma-1}\right)+ \int_y^x \frac{\sqrt{2s+\gamma-1}}{\sqrt{\beta(s^2+\gamma s-s)}}db_s
\label{process1}
\end{align}
Assume that $n_2 \gg n$.  There  is a Brownian motion $b(\cdot)$ such that for every $y,x\in (0,1]$ with $y<x$ we have 
\begin{equation}
\label{process2}
\sum_{k=\lfloor ny \rfloor }^{\lfloor nx \rfloor-1} \log \left(\frac{S_{k+1}\tilde C_k}{ C_{k} \tilde S_k}\right) \Rightarrow + \frac{a}{2}\log\left(\frac{y}{x}\right)+\frac{1}{ \sqrt{\beta}} \int_y^x \frac{db_s}{\sqrt s}
\end{equation}
again with both convergence statements holding in the Skorohod topology.
\end{lemma}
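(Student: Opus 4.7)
The plan is to split the sum into its deterministic mean and a centered fluctuation, and to treat each via asymptotic expansions of Beta-logarithm moments together with a functional CLT; the key algebraic preliminary is a reindexing that exposes the independence structure of the underlying Beta variables.

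First, I would reindex by shifting $\log S_{k+1} \to \log S_k$, so that each $S_k$ is paired with the $C_k$ that comes from the \emph{same} Beta variable $C_k^2 = 1-S_k^2$, and analogously each $(\log \tilde C_k, \log \tilde S_k)$ pair arises from a single $\tilde C_k^2$. This produces a boundary contribution $\log S_{\lfloor nx\rfloor} - \log C_{\lfloor ny\rfloor}$ plus a bulk $\sum_k(\log S_k - \log C_k) + \sum_k(\log \tilde C_k - \log \tilde S_k)$ whose summands across different indices and of different types are mutually independent, opening the way for a direct application of Proposition \ref{kurtz} to the centered fluctuation.

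For the deterministic part I would use $E[\log X - \log(1-X)] = \psi(\alpha) - \psi(\alpha')$ for $X \sim \text{Beta}(\alpha,\alpha')$ together with $\psi(x) = \log x - 1/(2x) + O(x^{-2})$. In the regime $n_2/n\to\gamma$, $n_1 = n+a_n$, $a_n\to a$, the leading $\log$-parts from the two bulk sub-sums cancel (reflecting the identity $\frac{s}{s+\gamma-1}\cdot\frac{s+\gamma-1}{s}=1$), and the residual $O(1/n)$ corrections Riemann-integrate to explicit logarithms of rational functions of $x,y,\gamma$. The boundary, by concentration of the Beta variables on their means, converges in probability to $\frac{1}{2}\log\frac{(x+\gamma-1)(2y+\gamma-1)}{y(2x+\gamma-1)}$. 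Adding these and simplifying should recover the deterministic part of the claimed limit. The case $n_2 \gg n$ is structurally parallel but simpler: the $\tilde C_k^2$ variable concentrates at $0$ and $S_k^2$ at $1$, so only the $\log C_k$ contribution survives, giving $\frac{a}{2}\log(y/x)$.

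For the centered fluctuation I would apply Proposition \ref{kurtz} to $M_n(x) = \sum_{k=\lfloor ny\rfloor}^{\lfloor nx\rfloor-1}[\log(S_{k+1}\tilde C_k/(C_k\tilde S_k)) - E(\cdot)]$ with $\epsilon_n = 1/n$. Using the Dirichlet identity $\text{Var}(\log X - \log(1-X)) = \psi'(\alpha) + \psi'(\alpha')$ and $\psi'(x)\sim 1/x$, the per-summand variance at $k = \lfloor ns\rfloor$ becomes $V_k \sim \frac{1}{\beta n}[\frac{1}{s} + \frac{1}{s+\gamma-1}]$, which equals $\varphi^2(s)/n$ for the claimed integrand $\varphi^2(s) = (2s+\gamma-1)/(\beta s(s+\gamma-1))$. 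Fourth moments are $O(n^{-2})$ per summand by the same Beta estimates, so Proposition \ref{kurtz} delivers convergence of the fluctuation process to $\int_y^x \varphi(s)\, db_s$ for some Brownian motion $b$. In the $n_2 \gg n$ case the trigamma terms involving $n_2$ vanish, reducing the integrand to $1/\sqrt{\beta s}$ as claimed. The main technical obstacle I anticipate is the careful bookkeeping of the mismatched index ranges in the two reindexed sub-sums, which generates a non-obvious boundary-like correction needed to produce the logarithmic prefactor $\log(\sqrt{x(\gamma-1+2y)}/\sqrt{y(\gamma-1+2x)})$. A secondary point is tightness near $s = 0$, where the variance integrand has a $1/s$ singularity; as in Lemma \ref{mainterm}, this is handled by first proving convergence on $[\delta,1]$ via Kolmogorov's criterion and then letting $\delta \to 0$.
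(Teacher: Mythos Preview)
Your plan is essentially the paper's proof: reindex to expose independent increments, handle the resulting boundary term by concentration of the Beta variables, and apply Proposition~\ref{kurtz} to the bulk using exactly the digamma/trigamma asymptotics you cite. The one difference is the choice of reindexing: the paper shifts $C_k\to C_{k+1}$ rather than $S_{k+1}\to S_k$, writing the sum as $\log C_{\lfloor nx\rfloor}-\log C_{\lfloor ny\rfloor}+\sum_{k}\log\frac{S_{k+1}\tilde C_k}{C_{k+1}\tilde S_k}$, so that both the $(S_{k+1},C_{k+1})$ pair and the $(\tilde C_k,\tilde S_k)$ pair run over the \emph{same} range $\lfloor ny\rfloor\le k\le\lfloor nx\rfloor-1$; this sidesteps the mismatched-index bookkeeping you anticipate and yields the drift $nE(\Delta y_{n,nx})=-\tfrac{1+a}{2x}-\tfrac{a}{2(x+\gamma-1)}+o(1)$ and variance $\tfrac{1}{\beta x}+\tfrac{1}{\beta(x+\gamma-1)}+o(1)$ directly.
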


Before beginning the proof we note the following:

\begin{proposition}
Let $X\sim \text{Beta}(p,q)$, then 
\begin{align}
\notag
 \textup{E}\left(\log \sqrt{\frac{X}{1-X}}\right) = \frac12(\Psi_0(p)-\Psi_0(q)), \hspace{1cm}   \textup{Var}\left(\log \sqrt{\frac{X}{1-X}}\right) = \frac14(\Psi_1(p)+\Psi_1(q)),
\end{align}
where $\Psi_0$ and $\Psi_1$ are respectively the digamma and trigamma function. Moreover as $x\to \infty$
\begin{eqnarray}\nonumber
 \Psi_0(x) &=& \frac{\Gamma'(x)}{\Gamma(x)} = \log(x) - \frac{1}{2x} - \frac{1}{12x^2} + O(x^{-4}),\\ \label{gamma-exp}
\Psi'(x)&=& \Psi_0'(x)=\frac{1}{x}+\frac{1}{2 x^2}+O(x^{-3}).
\end{eqnarray}

\end{proposition}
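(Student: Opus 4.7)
\medskip

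\noindent\textbf{Proof proposal.} The plan is to derive both identities from the Mellin transform of the Beta distribution, viewed as a joint cumulant generating function, and then read off the asymptotic expansions from Stirling's series.

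First I would note that for $X \sim \text{Beta}(p,q)$ and $s,t > -\min(p,q)$, the Beta integral gives
\[ E\bigl[X^{s}(1-X)^{t}\bigr] = \frac{B(p+s,\,q+t)}{B(p,q)} = \frac{\Gamma(p+s)\,\Gamma(q+t)\,\Gamma(p+q)}{\Gamma(p)\,\Gamma(q)\,\Gamma(p+q+s+t)}. \]
This is analytic in $(s,t)$ near the origin, which justifies differentiating under the integral. Differentiating with respect to $s$ at $(0,0)$ and dividing by the value at $(0,0)$ produces $E[\log X] = \Psi_0(p) - \Psi_0(p+q)$; the analogous step in $t$ gives $E[\log(1-X)] = \Psi_0(q) - \Psi_0(p+q)$. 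Subtracting, halving, and using linearity of expectation yields the mean formula.

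For the variance, I would work with the joint cumulant generating function $K(s,t) = \log E[X^{s}(1-X)^{t}]$, which by the display above splits as $\log\Gamma(p+s) + \log\Gamma(q+t) - \log\Gamma(p+q+s+t)$ plus terms not involving $(s,t)$. The pure second partials at $(0,0)$ give
\[ \text{Var}(\log X) = \Psi_1(p) - \Psi_1(p+q), \qquad \text{Var}(\log(1-X)) = \Psi_1(q) - \Psi_1(p+q), \]
while the mixed partial gives $\text{Cov}(\log X,\log(1-X)) = -\Psi_1(p+q)$, because only the $\Gamma(p+q+s+t)$ factor depends on both $s$ and $t$. Combining via
\[ \text{Var}\!\left(\tfrac{1}{2}\log\tfrac{X}{1-X}\right) = \tfrac{1}{4}\!\left[\text{Var}(\log X) + \text{Var}(\log(1-X)) - 2\,\text{Cov}(\log X,\log(1-X))\right], \]
the $\Psi_1(p+q)$ contributions cancel and one is left with $\tfrac{1}{4}(\Psi_1(p)+\Psi_1(q))$, as claimed.

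Finally, for the asymptotics I would invoke the standard Stirling expansion
\[ \log\Gamma(x) = (x-\tfrac{1}{2})\log x - x + \tfrac{1}{2}\log(2\pi) + \frac{1}{12x} - \frac{1}{360x^{3}} + O(x^{-5}), \]
and differentiate term-by-term once to recover the expansion of $\Psi_0$ and twice to recover that of $\Psi_1$; termwise differentiation of the Stirling asymptotic is a standard fact from the theory of asymptotic series for analytic functions. No serious obstacle is expected; the only mildly subtle point is recognising that the covariance $-\Psi_1(p+q)$ cancels exactly the two unwanted $\Psi_1(p+q)$ terms from $\text{Var}(\log X)$ and $\text{Var}(\log(1-X))$, which is what makes the final variance depend symmetrically only on $p$ and $q$.
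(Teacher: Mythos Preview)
Your argument is correct. Note, however, that the paper does not actually supply a proof of this proposition: it is stated as a standard fact (``we note the following'') and then immediately used in the proof of the subsequent lemma. So there is nothing to compare against; you have simply filled in the details that the paper leaves to the reader. Your derivation via the joint moment generating function $E[X^{s}(1-X)^{t}]=B(p+s,q+t)/B(p,q)$ and its logarithm as cumulant generating function for $(\log X,\log(1-X))$ is the natural route, and the cancellation of the $\Psi_1(p+q)$ terms that you highlight is exactly why the variance has the clean symmetric form stated. The asymptotic expansions are likewise standard consequences of Stirling's series, as you indicate.
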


\begin{proof}[Proof of Lemma \ref{process}]
We rearrange the process to have independent increments and then apply Proposition \ref{kurtz} with $\epsilon_n=1/n$.
\begin{align*}
\sum_{k=\lfloor ny \rfloor }^{\lfloor nx \rfloor-1} \log \left(\frac{S_{k+1}\tilde C_k}{ C_{k} \tilde S_k}\right)  &=\log C_{\lfloor nx \rfloor} - \log C_{\lfloor ny \rfloor}+ \sum_{k=\lfloor ny \rfloor }^{\lfloor nx \rfloor-1} \log \left(\frac{S_{k+1}\tilde C_k}{ C_{k+1} \tilde S_k}\right). 
\end{align*}
Similar computation to the proof of Lemma \ref{mainterm} give us that if $n_2/n \to \gamma$
\[ \log\left( \frac{C_{\lfloor nx \rfloor}}{ C_{\lfloor ny \rfloor}}\right) \Rightarrow  \log \left( \frac{\sqrt{ x(\gamma-1+2y)}}{\sqrt{y(\gamma-1+2x)}} \right). \]
And similarly if $n_2\gg n$
\[ \log\left( \frac{C_{\lfloor nx \rfloor}}{ C_{\lfloor ny \rfloor}}\right) \Rightarrow  \log \left( \frac{\sqrt x}{\sqrt y} \right). \]
We now consider the process $y_n$ with increments 
\[\Delta y_{n,k} = \log(S_{k+1}/C_{k+1})-\log (\tilde S_k/\tilde C_k).\]

\noindent In the case where $n_2/n\to \gamma$ we find that 
$$nE(\Delta y_{n,nx})= -\frac{1+a}{2x}-\frac{a}{2(x+\gamma -1)}+o(1),$$
and
$$n\var(\Delta y_{n,nx})= \frac{1}{\beta x}+\frac{1}{\beta(x+\gamma-1)}+o(1).$$
We can also check that  $nE(\Delta y_{n,k})^4=o(1)$.  By Proposition \ref{kurtz} we get the convergence statement in (\ref{process1}).

Similar calculations in the case where $n_2\gg n$ give you that
\[nE(\Delta y_{n,nx})= -\frac{1+a}{2x}+o(1), \hspace{.5cm}
n\var(\Delta y_{n,nx})= \frac{1}{\beta x}+o(1), \hspace{.5cm} nE(\Delta y_{n,k})^4=o(1).\]
This gives us the convergence in (\ref{process2}).

\end{proof}

\subsubsection{Verifying Assumption 2}

We now turn to the tightness conditions.  For (\ref{A2mainterm}) we begin by using the sum bound:
\begin{eqnarray*}
P\left(  \sup_{1\leq k \leq n} \frac{1}{C_k}\cdot \frac{\sqrt{a_n+k}}{\sqrt{a_n+n_2-n+2k}} >M  \right)&\leq & \sum_{k=1}^n P\left( C_k^2 < \frac{a_n+k}{M^2(a_n+n_2-n+2k)}\right)
\end{eqnarray*}
Now recall that for $X\sim \Gamma(p,\theta)$, $Y\sim \Gamma(q,\theta)$ we have that $X/(X+Y)\sim \text{Beta}(p,q)$.  We then start by finding bound on $X$ and $Y$ before turning to the Beta distribution.  In particular we compute $P( X>NE(X))$ and $P(X<E(X)/N)$ for large $N$:  First notice that $E(X)= p\theta$ and so an application of an exponential Chebyshev's inequality with $\theta =1$ gives us
\[P( X>pN)\leq \left(\frac{2}{e^{N/2}}\right)^p, \ \ \ \ \text{ and } \ \ \ \ P(X<p/N) \leq \left( \frac{e}{1+N} \right)^p \]
Using these bounds we can find an upper bound on
\[ P\left(\frac{X}{X+Y}\cdot \frac{p+q}{p} <\frac{1}{N}\right) \]
with exponent $(p\wedge q)$.  

When applied to our original setting, this bound is summable in $k$ (as $n \to \infty$), and moreover the resulting sum may be made as small as desired by increasing $M$.  Therefore, for any $\epsilon>0$ we can choose $M$ such that
\[P\left(  \sup_{1\leq k \leq n} \frac{1}{C_k}\cdot \frac{\sqrt{a_n+k}}{\sqrt{a_n+n_2-n+2k}} >M  \right)<\epsilon,\]
and so the $\kappa_n$ are tight.

For the tightness of the $\kappa_n'$ we use the proof of Lemma 5 in \cite{RR}.  That proof is a reworking of the upper bound in the law of the iterated logarithm and the proof in this situation proceeds with few changes.  We note first that this section will again need to be done in two cases.  We make the following definitions which are analogous to the $A_x^n$ defined in \cite{RRV}.
\begin{equation}
A_{x}^n=\sum_{k=j}^{n-1}  \left[ \log\left(S_{k+1}/C_{k+1}\right)- \log \left( \tilde S_k / \tilde C_k\right) \right] 
- \frac{1+a}{2} \log \left( \frac{j}{i} \right)- \frac{a}{2} \log \left( \frac{j+n\gamma-n}{i+n\gamma-n}\right)
\end{equation}
and
\begin{equation}
B_{x}^n= \sum_{k=j}^{i-1} \left[ \log\left(S_{k+1}/C_{k+1}\right)- \log \left( \tilde S_k / \tilde C_k\right) \right] - \frac{a+1}{2} \log \left( \frac{j}{i} \right).
\end{equation}
for $x \in [x_j,x_{j+1})$.  Here $A_x^n$ will be used in the case where $n_2/n \to \gamma <\infty$ and the associated $T(x)= \frac{1}{\beta}\left( \log \frac{1}{x} + \log \frac{1}{\gamma-1+x}\right)$.  In the case where $n_2\gg n$ the object of interest will be $B_x^n$ with $T(x)= \frac{1}{\beta} \log \frac{1}{x}$.

\medskip

\noindent The changes noted above together with the following claim are sufficient to show the $\kappa_n'$ are tight.  

\begin{remark}
The $T(x)$ as defined in the general theorem differs from the $T(x)$ defined here by a constant.  This omission is permissible because we always consider
\[C(1+T^{3/4}(x)+T^{3/4}(y))\]
and so the difference may be absorbed by the constant term.
\end{remark}

\begin{claim}
\label{moment generating}
For all $\lambda>0$ sufficiently small ($\lambda< (\beta/2)[(a+1)\wedge 1]$ will do), if $n_2/n \to \gamma < \infty$ then
\begin{equation}
E[e^{\lambda A_{x_j}^n}]= \exp \left\{ \frac{\lambda^2}{2\beta}\left[ \log \left( \frac{1}{x_j}\right)+ \log \left( \frac{\gamma}{\gamma-1+x_j}\right)\right]+ \Theta_n(j)\right\}
\end{equation}
With $|\Theta_n(j)|\leq C$ for constant $C=C(a,\beta,\gamma)$.
And if $n_2/n \to \infty$
\begin{equation}
E[e^{\lambda B_{x_j}^n}]= \exp \left\{ \frac{\lambda^2}{2\beta} \log \left(\frac{1}{x_j}\right)+ \Theta_n(j)\right\}
\end{equation}
With $|\Theta_n(j)|\leq C$ for constant $C=C(a,\beta)$. 
\end{claim}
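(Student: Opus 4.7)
The plan is to exploit independence of the increments $Y_k:=\log(S_{k+1}/C_{k+1})-\log(\tilde S_k/\tilde C_k)$ whose cumulative sum underlies $A^n_{x_j}$. Each $Y_k$ is a difference of two independent terms of the form $\tfrac12\log((1-X)/X)$ with $X\sim\mathrm{Beta}(p,q)$, and the moment generating function of such a variable is exactly $E[((1-X)/X)^{\lambda/2}]=\Gamma(p-\lambda/2)\Gamma(q+\lambda/2)/(\Gamma(p)\Gamma(q))$, valid whenever $\lambda/2<p$. Applying this with the Killip--Nenciu parameters $p_k=\tfrac\beta2(a_n+k+1)$, $q_k=\tfrac\beta2(n_2-n+k+1)$, $\tilde p_k=\tfrac\beta2 k$, $\tilde q_k=\tfrac\beta2(a_n+n_2-n+k+1)$, I obtain $\log E[e^{\lambda Y_k}]$ as a sum of four expressions of the form $\log\Gamma(x\pm\lambda/2)-\log\Gamma(x)$. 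The restriction $\lambda<(\beta/2)[(a+1)\wedge 1]$ is precisely what keeps all four arguments positive uniformly in $k\geq 1$ and $n$ large, so the identity is valid throughout the sum.

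Next I Taylor expand $\log\Gamma(x\pm\lambda/2)-\log\Gamma(x)=\pm\tfrac\lambda2\Psi_0(x)+\tfrac{\lambda^2}{8}\Psi_1(x)+R(x,\lambda)$, with $|R(x,\lambda)|\lesssim\lambda^3\sup|\Psi_2|$, and sum over $k=j,\dots,n-1$. The linear-in-$\lambda$ contribution, using $\Psi_0(x)=\log x+O(1/x)$, is $\tfrac\lambda2\sum_k\log\frac{q_k\tilde p_k}{p_k\tilde q_k}$ up to convergent error; a Riemann-sum / Euler--Maclaurin estimate identifies this with $\lambda D_j+O(1)$, where $D_j=\tfrac{1+a}{2}\log(j/n)+\tfrac{a}{2}\log\frac{j+n\gamma-n}{n\gamma}$ is exactly the deterministic piece subtracted in the definition of $A^n_{x_j}$. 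This cancellation is not accidental: the subtraction was engineered to annihilate the asymptotic mean. The quadratic-in-$\lambda$ contribution, via $\Psi_1(x)=1/x+O(1/x^2)$, reduces to $\tfrac{\lambda^2}{2\beta}\bigl[\sum_{k=j}^{n-1}1/(k+a_n)+\sum_{k=j}^{n-1}1/(n_2-n+k+a_n)\bigr]$ modulo a bounded error, and comparison with $\int_{j/n}^{1}dt/t$ and $\int_{j/n}^{1}dt/(\gamma-1+t)$ produces exactly $\tfrac{\lambda^2}{2\beta}[\log(1/x_j)+\log(\gamma/(\gamma-1+x_j))]+O(1)$.

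The main obstacle is the uniform bookkeeping of errors: every residual --- the cubic Taylor remainder, the $O(1/x)$ in $\Psi_0$, the $O(1/x^2)$ in $\Psi_1$, and the Riemann-sum discretization --- must aggregate into a single quantity $\Theta_n(j)$ bounded by a constant $C=C(a,\beta,\gamma)$ uniformly in $j\in\{1,\dots,n\}$ and in $n$. The key standard estimates are $\sum_{k\geq 1}\Psi_2(\alpha k)=O(\alpha^{-2})$ and $\sum_{k=j}^{n-1}1/k^2=O(1)$ uniformly in $j\geq 1$; these must be applied carefully near the lower endpoint $k=j$, which is what forces the separation of scales used in checking Assumption 2. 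The second case $n_2\gg n$ is strictly easier: as $n_2/n\to\infty$, the arguments $q_k$ and $\tilde q_k$ tend to infinity, so $\Psi_1(q_k)$ and $\Psi_1(\tilde q_k)$ contribute only $O(1)$ to the summed variance, and the linear mean cancellation now runs against $\tfrac{1+a}{2}\log(j/n)$ (the definition of $B^n_{x_j}$), leaving $\tfrac{\lambda^2}{2\beta}\log(1/x_j)+\Theta_n(j)$ as claimed.
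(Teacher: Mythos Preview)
Your proposal is correct and takes essentially the same approach as the paper: both compute the exact MGF via the Beta integral as a sum of $\log\Gamma(x\pm\lambda/2)-\log\Gamma(x)$ terms, expand to separate the linear-in-$\lambda$ mean (which cancels the deterministic subtraction in $A^n_{x_j}$, $B^n_{x_j}$) from the quadratic variance (which yields the $\tfrac{\lambda^2}{2\beta}\log$ terms), and control the summable $O(1/k^2)$ remainders. The only difference is organizational: the paper regroups the four $\log\Gamma$ differences into pairs $I_k,J_k$ by argument size and defers the expansion to Claim~10 of \cite{RR}, whereas you spell it out via the polygamma asymptotics $\Psi_0(x)=\log x+O(1/x)$ and $\Psi_1(x)=1/x+O(1/x^2)$.
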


\begin{proof}
[Proof of claim]
We have the following products to consider:  In the case where $n_2/n\to \gamma$ we have
\begin{equation}
\label{mg 1}
E[e^{\lambda A^n_{x_j}}]= E\prod_{k=j}^{n-1} \left( \frac{S_{k+1}}{C_{k+1}} \right)^\lambda \left( \frac{\tilde C_k}{\tilde S_k}\right)^\lambda \left(\frac{k+1}{k}\right)^{\lambda(a+1)/2}\left( \frac{n_2-n+k+1}{n_2-n+k}\right)^{\lambda a/2},
\end{equation}
and in the case where $n_2\gg n$ 
\begin{equation}
\label{mg 2}
E[e^{\lambda B^n_{x_j}}]= E\prod_{k=j}^{n-1} \left( \frac{S_{k+1}}{C_{k+1}} \right)^\lambda \left( \frac{\tilde C_k}{\tilde S_k}\right)^\lambda \left(\frac{k+1}{k}\right)^{2(a+1)/2}
\end{equation}
Let's start by considering the portion of (\ref{mg 1}) and (\ref{mg 2}) of the form
\[P=E\prod_{k=j}^{n-1} \left( \frac{S_{k+1}}{C_{k+1}}\right)^{\lambda}\left(\frac{\tilde C_k}{\tilde S_k}\right)^\lambda.\]
We use independence write this as a product of expectations, then by taking logarithms we can consider each term of the resulting sum separately.  For the $k$-th term this gives us $(\log P)_k=I_k+J_k$ where,
\begin{align*}
I_k=&\ \log \Gamma\left(\frac \beta 2 (n_2-n+k+1)+ \frac \lambda 2\right)-\log \Gamma \left(\frac \beta 2 (n_2-n+k+1)\right)\\
&\hspace{1cm} +\log \Gamma\left(\frac \beta 2 (a_n+n_2-n+k+1)-\frac \lambda 2\right)-\log \Gamma \left( \frac \beta 2 (a_n+n_2-n+k+1) \right)
\end{align*}
and
\begin{align*}
J_k=&\ \log \Gamma\left(\frac \beta 2 (a_n+k+1) -\frac{\lambda}{2}\right)- \log \Gamma \left( \frac \beta 2 (a_n+k+1) \right)\\
&\hspace{2cm} +\log \Gamma\left(\frac \beta 2 k+ \frac{\lambda}{2}\right)- \log \Gamma \left(\frac \beta 2 k\right)
\end{align*}
From the proof of Claim 10 in \cite{RR}, we can conclude that
\[ I_k= \frac{\lambda^2}{2\beta(n_2-n+k)}-\frac{\lambda a}{2}\log\left( 1+ \frac{1}{n_2-n+k}\right)+O(1/k^2)\]
and
\[ J_k = \frac{\lambda^2}{2\beta k}- \frac{\lambda (a+1)}{2} \log\left( 1+\frac{1}{k}\right)+O(1/k^2).\]
The remaining part of (\ref{mg 1}) gives a contribution of 
$$\frac{\lambda a}{2} \log\left(1+ \frac{1}{n_2-n+k}\right)+ \frac{\lambda (a+1)}{2} \log\left( 1+\frac{1}{k}\right).$$
We then use that that $$\sum_{k=1}^n 1/k = \log n+ c+O(1/2n)$$ to establish the claim in the case where $n_2/n\to \gamma$.  In the case where $n_2\gg n$, we have that $I_k=O(1/n_2)$ and so may be folded into the constant term.  Then the remaining term in (\ref{mg 2}) gives a contribution of 
$$ \frac{\lambda (a+1)}{2} \log\left( 1+\frac{1}{k}\right)$$ which establishes the claim in this case.
\end{proof}

\subsubsection{Checking Assumption 3}

To show that the $K_C$ are Hilbert-Schmidt we have the following proposition:

\begin{proposition}
For any constant $C$  and $a>-1$, the integral operators on $L^2[0,1]$ with kernels 
\[k_{C,\gamma}(x,y)=C \exp\left[ CT^{3/4}(x)+CT^{3/4}(y)\right]  \frac{\sqrt{2y+\gamma-1}}{(2x+\gamma-1)^{-1/2}}\frac{(y^2+\gamma y-y)^{a/2}}{(x^2+\gamma x-x)^{(a+1)/2}}\ind_{\{y<x\}}\]
where $T(x)= \frac{1}{\beta}\left(\log \frac{1}{x}+ \log \frac{1}{\gamma-1+x}\right)$
and
$$k_C(x,y)=C \exp\left[ C( \log (1/x))^{3/4}+C(\log(1/y))^{3/4}\right] y^{a/2}x^{-(a+1)/2}\ind_{\{y<x\}}$$
are Hilbert-Schmidt.
\end{proposition}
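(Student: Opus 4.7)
The goal is to verify $\iint_{[0,1]^2}|k(x,y)|^2\,dx\,dy<\infty$ for each of the two kernels. The essential observation is a subexponential bound: since $u^{3/4}=o(u)$ as $u\to\infty$, for any $\eta>0$ there is a constant $c_\eta<\infty$ such that $\exp(2Cu^{3/4})\le c_\eta e^{\eta u}$ for every $u\ge 0$. Setting $u=\log(1/x)$ yields $\exp(2C(\log(1/x))^{3/4})\le c_\eta x^{-\eta}$. Applying the subadditivity $(\alpha+\beta)^{3/4}\le \alpha^{3/4}+\beta^{3/4}$ to $T(x)=\frac{1}{\beta}(\log(1/x)+\log(1/(\gamma-1+x)))$, I would upgrade this to
\[
e^{2CT^{3/4}(x)} \le c_\eta\, x^{-\eta}(\gamma-1+x)^{-\eta},
\]
with an analogous bound in $y$. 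This is the only real input.

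Next I would substitute these bounds into $|k_{C,\gamma}(x,y)|^2$ and collect powers. When $\gamma>1$, the factors $(\gamma-1+x)$, $(\gamma-1+y)$, $\sqrt{2x+\gamma-1}$, and $\sqrt{2y+\gamma-1}$ are bounded above and away from zero on $[0,1]$, so after absorbing them as constants the squared kernel is dominated by $C'\,x^{-(a+1)-\eta}\,y^{a-\eta}\,\mathbf{1}_{y<x}$. The simpler kernel $k_C$ satisfies the same bound directly. When $\gamma=1$, the factor $x+\gamma-1$ coincides with $x$, and combining with $\sqrt{2x}\sqrt{2y}$ I would obtain $|k_{C,1}(x,y)|^2 \le C''\,x^{-(2a+1)-\eta}\,y^{(2a+1)-\eta}\,\mathbf{1}_{y<x}$.

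The final step is integration. Doing the $y$-integral first over $[0,x]$, the hypothesis $a>-1$ ensures that for $\eta$ sufficiently small the exponent on $y$ exceeds $-1$, and the integral returns a power of $x$ that exactly cancels the leading $x^{-(a+1)}$ (respectively $x^{-(2a+1)}$) singularity, leaving an integrand of the form $x^{-2\eta}$ (respectively $x^{1-2\eta}$) which is integrable on $[0,1]$ for $\eta<1/2$. Choosing $\eta$ small enough to satisfy both constraints simultaneously closes the estimate.

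I do not foresee a substantive obstacle: the argument is almost mechanical once the subexponential estimate is noted. The only place requiring mild care is the $\gamma=1$ endpoint of the first kernel, where the two logarithmic singularities in $T$ coalesce and the polynomial exponents in the kernel must be retallied; carrying the $\gamma-1+x$ factors through the computation and only specializing at the end handles this cleanly.
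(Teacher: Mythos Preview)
Your argument is correct, but it proceeds differently from the paper. The paper makes the change of variables $x^2+\gamma x-x=e^{-s}$, $y^2+\gamma y-y=e^{-t}$ for $k_{C,\gamma}$ (respectively $x=e^{-s}$, $y=e^{-t}$ for $k_C$); the Jacobian $(2x+\gamma-1)\,dx=-e^{-s}\,ds$ exactly cancels the square-root prefactors, $T$ becomes linear in the new variable, and \emph{both} Hilbert--Schmidt norms collapse to the single integral
\[
C^2\int_0^\infty e^{2Cs^{3/4}+as}\int_s^\infty e^{2Ct^{3/4}-(a+1)t}\,dt\,ds,
\]
which is visibly finite iff $a>-1$. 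Your route instead dominates $e^{2CT^{3/4}}$ by a small power via the subexponential estimate and integrates in the original variables. This is perfectly valid and arguably more robust, but it forfeits the unification: you must split into $\gamma>1$ and $\gamma=1$ and retally exponents in each case, whereas the paper's substitution handles all $\gamma\ge 1$ and the second kernel in one stroke. A minor point: your subadditivity step tacitly assumes $\log(1/(\gamma-1+x))\ge 0$, which can fail for $\gamma>1$; this is harmless since in that regime the term is bounded and can simply be dropped, but it is worth saying so explicitly.
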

\begin{proof}
For the operator with kernel $k_{C,\gamma}$ the change of variable $x^2+\gamma x-x =e^{-s}$ and $y^2+\gamma y-y =e^{-t}$ gives the square of the Hilbert-Schmidt norm
$$\int_0^1 \int_0^1 |k_{C,\gamma} (x,y)|^2 dxdy= C^2 \int_0^\infty e^{2Cs^{3/4}+as} \int_s^\infty e^{2Ct^{3/4}-(a+1)t}dtds.$$
Similarly for the operator with kernel $k_C(x,y)$ we can make the change of variables $x=e^{-s}$ and $y=e^{-t}$ to find the same double integral.  This operator is finite if and only if $a>-1$, so both operators are Hilbert-Schmidt.
\end{proof}

\subsection{Checking Assumption 4}

To show that the eigenvalues of $(KK^T)^{-1}$ are simple with $0< \Lambda_0< \Lambda_1< \cdots \uparrow \infty$ we make the following observations: 

\noindent{\bf Observation 1:}  In the case where $n_2/n\to \gamma$ the spectral problem reads  
\begin{align}
f(x)&= \lambda K^TKf(x)= \lambda \int_0^1 k(y,x) \int_0^1 k(y,z) f(z)dzdy\\
f(x)&= \frac{\lambda}{\gamma}\int_x^1 \frac{\sqrt{2x+\gamma-1}}{(x^2+\gamma x-x)^{-a/2}}\frac{2y+\gamma-1}{ (y^2+\gamma y-y)^{a+1}}\exp \left(\int_{x}^{y} \sqrt{ \frac{2s+\gamma-1}{\beta s(s+\gamma-1)}}db_s\right)\notag \\
&\hspace{1.5cm} \times \int_0^y \frac{ \sqrt{2z+\gamma-1}}{(z^2+\gamma z-1)^{-a/2}}\exp\left(\int_{z}^{ y} \sqrt{\frac{2s+\gamma-1}{\beta s(s+\gamma-1)}}db_s\right)f(z)dzdy.\notag
\end{align}
With a bit of rearranging we find that for 
\[g(x)= \frac{(x^2+ \gamma x -x)^{-a/2} }{ \sqrt{2x+\gamma-1}}\exp\left(-\int_{x}^{ 1} \sqrt{\frac{2s+\gamma-1}{\beta s(s+\gamma-1)}}db_s\right)f(x),\]
under the change of variables $(x(x+\gamma-1),y(y+\gamma-1), z(z+\gamma-1))\mapsto (\gamma p,\gamma q,\gamma r)$ with $h(p)=g(x)$ the previous equation reads
\begin{equation}
\label{obs1}
h(p)=\lambda \int_0^1 r^{a}e^{\frac{2}{\sqrt \beta} \hat b(\log 1/r)}h(r)\int_{p\vee r}^1 q^{-(a+1)}e^{-\frac{2}{\sqrt \beta} \hat b(\log 1/q)}dqdr.
\end{equation}

\smallskip
\noindent{\bf Observation 2:}  In the case where $n_2\gg n$ the spectral problem instead reads as
\begin{align*}
f(x)&= \lambda \int_{x}^1 x^{a/2}y^{-a-1} \exp \left(\frac{1}{\sqrt \beta}\int_{x}^y \frac{db_s}{\sqrt{s}}\right) \int_0^y z^{a/2} \exp \left(\frac{1}{\sqrt \beta}\int_{z}^y \frac{db_s}{\sqrt{s}}\right)f(z)dz dy.
\end{align*}
We then take $$g(x)=x^{-a/2} \exp \left(\frac{-1}{\sqrt \beta}\int_{x}^1 \frac{db_s}{\sqrt{s}}\right)f(x)$$
which again yields the equation
\begin{equation}
\label{obs2}
g(x)= \lambda \int_0^1 z^{a} e^{\frac{2}{\sqrt \beta} \hat b(\log 1/z)}\int_{x \vee z}^1 y^{-(a+1)} e^{-\frac{2}{\sqrt \beta} \hat b(\log 1/y)}g(z)dydz.
\end{equation}
Equations (\ref{obs1}) and (\ref{obs2}) are equivalent to the eigenvalue equation $\psi = \lambda \mathfrak{G}_{\beta,a} \psi$ (\cite{RR}, Proof of Theorem1).  The simplicity of the eigenvalues of $\mathfrak{G}_{\beta,a}$ and their ordering with $0<\Lambda_0(\beta,a)<\Lambda_{1}(\beta,a)<\cdots \uparrow \infty$ are discussed by Ram\'irez and Rider in \cite{RR}.

\smallskip

\noindent We have showed that Theorem \ref{inverselimit} applies to $W_{n,\beta}$ and the spectrum of the limiting operator is the same as that of $\mathfrak{G}_{\beta,a}$.  This completes the proof of Theorem \ref{HEL}.


\section{Appendix: Proof of Theorem \ref{tridiagonal}}

We will be working primarily with a lower bidiagonal matrix $M$, and the symmetric tridiagonal matrix $MM^T$.  For convenience we will adopt the following notations.  Denote the diagonal entries of our symmetric tridiagonal matrix $MM^T$ by $\Ba = \{a_1,...,a_n\}$ and its off-diagonal entries by $\Bb=\{b_1,..., b_{n-1} \}$. For the entries of the bi-diagonal matrix $M$ use $\Bx = \{x_1,...,x_n\}$ to denote the diagonal entries and  $\By=\{y_1,...,y_{n-1} \}$ for its sub-diagonal entries.  We will also use another tridiagonal matrix which arises in the following lemma (see e.g. \cite{DF}):

\begin{lemma}
\label{doubling}

Let $M$ be an $n \times n$ bidiagonal matrix with $a_1, a_2, \dots, a_n$ in the diagonal and $b_1, b_2, \dots, b_{n-1}$ in the off-diagonal. Consider the $2n\times 2n$ symmetric tridiagonal matrix 
$L$ which has zeros in the main diagonal and $a_1, b_1, a_2, b_2, \dots, a_n$ above and below the diagonal. If the singular values of $M$ are $\lambda_1, \lambda_2, \dots, \lambda_n$ then the eigenvalues of $L$ are $\pm \lambda_i, i=1\dots n$.
\end{lemma}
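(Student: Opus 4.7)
The plan is to observe that $L$ is similar, via a permutation, to a block anti-diagonal $2n \times 2n$ matrix built from $M$ and $M^T$, whose eigenvalues are a standard consequence of the singular value decomposition.

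First I would introduce the permutation $\pi$ of $\{1,2,\dots,2n\}$ that lists the odd indices followed by the even indices, i.e.\ $\pi(2i-1) = i$ and $\pi(2i) = n+i$. Let $P$ be the associated permutation matrix. Since the nonzero entries of $L$ lie at positions $(i,j)$ with $|i-j|=1$, they connect rows of one parity to columns of the opposite parity; hence $P L P^{T}$ has zero $n \times n$ diagonal blocks. A direct bookkeeping check identifies the upper-right block: its $(i,j)$ entry is $L_{2i-1,2j}$, which equals $a_i$ when $j=i$ and $b_{i-1}$ when $j=i-1$ (and zero otherwise). This is exactly $M$, and by symmetry the lower-left block is $M^{T}$, giving
\[
P L P^{T} \;=\; \begin{pmatrix} 0 & M \\ M^{T} & 0 \end{pmatrix}.
\]

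Second I would invoke the SVD $M = U \Sigma V^{T}$ with singular values $\lambda_1,\dots,\lambda_n$ and corresponding columns $u_i$ of $U$ and $v_i$ of $V$. From $M v_i = \lambda_i u_i$ and $M^{T} u_i = \lambda_i v_i$, one verifies immediately that the vectors $\binom{u_i}{\,v_i}$ and $\binom{u_i}{-v_i}$ are eigenvectors of the block matrix with eigenvalues $+\lambda_i$ and $-\lambda_i$, respectively. Orthogonality of $U$ and $V$ ensures these $2n$ vectors are linearly independent, so they exhaust the spectrum. Since similarity preserves eigenvalues, the eigenvalues of $L$ are precisely $\pm \lambda_i$, $i = 1, \dots, n$.

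There is no real obstacle here; the only thing requiring care is the indexing of the odd/even permutation and the verification that $P L P^{T}$ has the claimed block form. The result is purely linear-algebraic and essentially a restatement of the standard correspondence between singular values of a rectangular matrix and eigenvalues of its Hermitian dilation, specialized to the bidiagonal case.
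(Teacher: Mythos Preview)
Your argument is correct: the odd/even permutation block-diagonalizes $L$ into $\begin{pmatrix}0 & M\\ M^T & 0\end{pmatrix}$ (with $M$ lower bidiagonal, matching the paper's convention), and the eigenvalue claim then follows from the SVD in the standard way. The paper does not actually supply a proof of this lemma; it simply states it with a reference to \cite{DF}, so there is nothing to compare against beyond noting that what you wrote is the canonical proof one finds in that literature.
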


Recall the definitions of $M_{\beta, n}, C_k $ and $\tilde C_k$ from the statement of Theorem \ref{tridiagonal}.  It will be convenient write $\{C_i,S_i\}_{i=1}^n$ and $\{\tilde C_i, \tilde S_i\}_{i=1}^{n-1}$ in terms of a set of random angles $\Balpha = \{\alpha_1, ..., \alpha_n \}$ and $\Btheta=\{\theta_1,..., \theta_{n-1}\}$ where
$C_k = \cos(\alpha_k)$ and $\tilde C_k = \cos(\theta_k)$. Then, $S_k = \sin(\alpha_k)$ and $\tilde S_k = \sin(\theta_k)$, and the densities of the random angles $\alpha_k$ and $\theta_k$ are:
\begin{eqnarray*}
 f^{\alpha}_k & = &\frac{ 2}{B\left( \frac{\beta}{2}(n_1-n+k), \frac{\beta}{2}(n_2-n+k)\right)} \cos^{2a + \beta(k-1)+1}(\alpha_k) \sin^{2b + \beta(k-1)+1}(\alpha_k)\\
 f^{\theta}_k & = & \frac{ 2}{B\left( \frac{\beta}{2}k, \frac{\beta}{2}(n_1+n_2-2n+k+1)\right)} \cos^{\beta k}(\theta_k) \sin^{2a + 2b + \beta(k-1) + 3}(\theta_k)
\end{eqnarray*}
with $a,b$ defined as in Theorem \ref{tridiagonal} and $B(x,y)$ the beta function.  By independence, the joint density of $(\Balpha, \Btheta)$ is given by the product of the densities.  For convenience we will denote 
\[\tilde Z_{\beta,n}= \frac{2^{2n-1}}{\prod_{k=1}^nB\left( \frac{\beta}{2}(n_1-n+k), \frac{\beta}{2}(n_2-n+k)\right) \prod_{k=1}^{n-1}B\left( \frac{\beta}{2}k, \frac{\beta}{2}(n_1+n_2-2n+k+1)\right)}\]
This will be the necessary normalizing constant.

We  will now map $(\Balpha, \Btheta)$ to the entries of $M$ and from there to $(\Blambda, \Bq)$ where the $\lambda_i$ are the eigenvalues of $MM^T$ and the  $q_i$ are the positive leading entries of the corresponding normalized eigenvectors.  This second map will actually by the composition of several maps.  

\noindent From the angles we map to the entries of the bidiagonal matrix:
\begin{lemma}The Jacobian of the transform $T:(\Balpha, \Btheta) \to (\Bx, \By)$, where
\begin{eqnarray*}
 x_k & = &  \cos(\alpha_k) \sin(\theta_k),\quad  k=1,\cdots,n \\
 y_k & = & \sin(\alpha_{k+1}) \cos(\theta_k),\quad k=1,\cdots,n-1.
\end{eqnarray*}
(for notational convenience we take $\theta_n=\pi/2$) is given by
 \begin{eqnarray*}
   J_{T} = \frac{\sin^2(\alpha_n)}{\sin(\alpha_1)} \prod_{k=1}^{n-1} \sin^2(\alpha_k)\sin^2(\theta_k).
 \end{eqnarray*}
\end{lemma}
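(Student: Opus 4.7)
The plan is to write out the $(2n-1) \times (2n-1)$ Jacobian matrix of partial derivatives and show that, under a suitable ordering of the inputs and outputs, it becomes triangular, so that its determinant is just the product of the diagonal entries. First I would compute the four types of partial derivatives:
\begin{align*}
\partial x_k/\partial \alpha_k &= -\sin(\alpha_k)\sin(\theta_k), & \partial x_k/\partial \theta_k &= \cos(\alpha_k)\cos(\theta_k),\\
\partial y_k/\partial \theta_k &= -\sin(\alpha_{k+1})\sin(\theta_k), & \partial y_k/\partial \alpha_{k+1} &= \cos(\alpha_{k+1})\cos(\theta_k),
\end{align*}
and note that these are the only nonzero partials of $T$. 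Because of the convention $\theta_n = \pi/2$, the $x_n$-row contributes only $\partial x_n / \partial \alpha_n = -\sin(\alpha_n)$.

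Next I would order the input variables as $(\alpha_1,\theta_1,\alpha_2,\theta_2,\dots,\alpha_{n-1},\theta_{n-1},\alpha_n)$ and the output variables as $(x_1,y_1,x_2,y_2,\dots,x_{n-1},y_{n-1},x_n)$. With this ordering, one checks from the list above that row $x_k$ has nonzero entries only in the diagonal column $\alpha_k$ and the immediately following column $\theta_k$, while row $y_k$ has nonzero entries only in the diagonal column $\theta_k$ and the next column $\alpha_{k+1}$. Thus the Jacobian is upper bidiagonal, and in particular triangular, so
\[
|J_T| \;=\; \Bigl|\prod_{k=1}^{n-1}\bigl(\partial_{\alpha_k}x_k\bigr)\bigl(\partial_{\theta_k}y_k\bigr)\Bigr|\cdot \bigl|\partial_{\alpha_n}x_n\bigr|
\;=\; \Bigl(\prod_{k=1}^{n-1} \sin(\alpha_k)\sin(\theta_k)\sin(\alpha_{k+1})\sin(\theta_k)\Bigr)\sin(\alpha_n).
\]

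Finally I would simplify. Separating the $\theta$-factors gives $\prod_{k=1}^{n-1}\sin^2(\theta_k)$, while the product of the $\sin(\alpha_k)$ and $\sin(\alpha_{k+1})$ factors, together with the trailing $\sin(\alpha_n)$, rearranges as
\[
\sin(\alpha_1)\,\Bigl(\prod_{k=2}^{n-1}\sin^2(\alpha_k)\Bigr)\,\sin^2(\alpha_n)
\;=\; \frac{\sin^2(\alpha_n)}{\sin(\alpha_1)} \prod_{k=1}^{n-1}\sin^2(\alpha_k),
\]
which yields exactly the claimed expression for $J_T$. There is no serious obstacle here; the only delicate point is the bookkeeping with the convention $\theta_n=\pi/2$, which produces the asymmetric $\sin^2(\alpha_n)/\sin(\alpha_1)$ prefactor and has to be handled carefully when verifying that the ordering really makes the Jacobian bidiagonal.
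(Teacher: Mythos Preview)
Your proof is correct and follows essentially the same approach as the paper: both compute the same partial derivatives and recognize that the determinant reduces to the product of the diagonal entries. Your interleaved ordering $(\alpha_1,\theta_1,\alpha_2,\theta_2,\dots,\alpha_n)\to(x_1,y_1,x_2,y_2,\dots,x_n)$ is in fact slightly cleaner than the paper's block ordering $(\alpha_1,\dots,\alpha_n,\theta_1,\dots,\theta_{n-1})\to(x_1,\dots,x_n,y_1,\dots,y_{n-1})$, since yours makes the Jacobian upper bidiagonal outright, whereas the paper's ordering requires an extra row/column reduction step to reach the same product of diagonals.
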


\begin{proof}
The matrix of the partial derivatives with ordering $(\alpha_1,...,\alpha_n,\theta_1,...,\theta_{n-1}) \to (x_1,...,x_n, y_1,...,y_{n-1})$ has diagonal entries
\[- \sin \alpha_1 \sin \theta_1,....,-\sin \alpha_{n-1}\sin \theta_{n-1}, -\sin \alpha_n, - \sin \alpha_2 \sin \theta_1,..., -\sin \alpha_n \sin \theta_{n-1}.\]
Row and column reduction gives us that the determinant is given by the product of the diagonal, therefore
\[J_T= -\frac{\sin^2(\alpha_n)}{\sin(\alpha_1)} \prod_{k=1}^{n-1} \sin^2(\alpha_k)\sin^2(\theta_k)
\qedhere
\]
\end{proof}

\noindent We finish by mapping from the bidiagonal to the tridiagonal matrix, which in turn is mapped to by the eigenvalues.  Denote by $q_1,...,q_n$ the leading entries of the eigenvectors associated with the ordered eigenvalues $\lambda_1<\lambda_2<\cdots< \lambda_n$ of the tridiagonal matrix normalized so that $q_i>0$ and $\sum q_i^2 =1$.  

\begin{lemma}[\cite{DE}, Lemmas 2.7, 2.9 and 2.11] 
For $\Bx,\By,\Ba,\Bb,\Blambda$ and $\Bq$ defined as above we have the following:

\begin{enumerate}
\item The Jacobian of the map $\psi: (\Bx, \By) \to (\Ba, \Bb)$ can be written as
\[ J_\psi = 2^n x_1 \prod_{i=2}^{n} x_i^2.\]

\item The Vandermonde determinant for the ordered eigvenvalues of a symmetric dridiagonal matrix with positive sub-diagonal $b=(b_{n-1},...,b_1)$ is given by
\[ \Delta ( \Blambda)= \prod_{i<j}(\lambda_i-\lambda_j)= \frac{\prod_{i=1}^{n-1} b_i^i}{\prod_{i=1}^n q_i}.\]

\item The Jacobian of the map $\phi:(\Ba, \Bb) \to (\Blambda, \Bq)$ can be written as
\[ J_\phi = \frac{\prod_{i=1}^{n-1} b_i}{\prod_{i=1}^n q_i}.\]

\end{enumerate}

\end{lemma}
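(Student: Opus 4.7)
All three claims are standard identities for symmetric tridiagonal matrices, attributable (up to indexing conventions) to Dumitriu--Edelman.  I would prove them independently and then combine.

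\emph{(i) Jacobian of $\psi$.}  Order the domain as $(x_1,y_1,x_2,y_2,\ldots,y_{n-1},x_n)$ and the codomain as $(a_1,b_1,a_2,b_2,\ldots,b_{n-1},a_n)$.  From $a_k=x_k^2+y_{k-1}^2$ (with the convention $y_0:=0$) and $b_k=x_k y_k$, each output depends on at most two consecutive inputs, so the Jacobian matrix is triangular in this ordering.  Its determinant is therefore the product of its diagonal entries; a direct enumeration of these entries gives $2^n x_1\prod_{i=2}^n x_i^2$.

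\emph{(iii) Jacobian of $\phi$.}  Compute by first-order perturbation theory.  Writing the spectral decomposition $MM^T=U\Lambda U^T$ with unit eigenvectors $v_i$ as the columns of $U$ and $q_i=(v_i)_1$ the first coordinate, a perturbation $(\delta a,\delta b)$ induces $\delta\lambda_i=\langle v_i,(\delta T)v_i\rangle$ together with first-order expressions for $\delta q_j$ obtained by differentiating $U^TU=I$.  Assembling the resulting $(2n-1)\times(2n-1)$ Jacobian matrix and simplifying with eigenvector orthogonality and the tridiagonal three-term recursive structure (which zeroes out many entries) produces $\prod_{i} b_i/\prod_{i} q_i$.

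\emph{(ii) Vandermonde formula.}  Exploit the orthogonal polynomial structure of $T$.  The recursion $b_k p_k(\lambda)=(\lambda-a_k)p_{k-1}(\lambda)-b_{k-1}p_{k-2}(\lambda)$ (with $p_0:=1$, $p_{-1}:=0$) produces polynomials $p_{k-1}$ of degree $k-1$ with leading coefficient $(b_1\cdots b_{k-1})^{-1}$, and the spectral theorem identifies the $i$-th unit eigenvector as $v_i=q_i\bigl(p_0(\lambda_i),p_1(\lambda_i),\ldots,p_{n-1}(\lambda_i)\bigr)^T$.  Since $U=[v_1|\cdots|v_n]$ is orthogonal, $|\det U|=1$.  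Evaluating this determinant by factoring out $\prod_i q_i$ and the row-wise leading coefficients and recognizing the remainder as a Vandermonde in the $\lambda_i$ gives the identity $\prod_i q_i\cdot\Delta(\Blambda)\cdot\prod_{j=1}^{n-1}b_j^{-(n-j)}=\pm 1$.  Rearranging yields the claimed formula (the exponent $b_j^j$ stated in the lemma emerges from the paper's reverse indexing convention $b=(b_{n-1},\ldots,b_1)$).

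The main obstacle is \emph{(ii)}: parts \emph{(i)} and \emph{(iii)} reduce to essentially mechanical triangular-determinant and eigenvalue-perturbation computations, while \emph{(ii)} requires the non-obvious passage through the orthogonal polynomial machinery to bridge the spectral data $(\Blambda,\Bq)$ and the algebraic data $(\Ba,\Bb)$.  Once all three are in hand, they plug directly into the change-of-variables chain $(\Balpha,\Btheta)\to(\Bx,\By)\to(\Ba,\Bb)\to(\Blambda,\Bq)$ to complete the proof of Theorem~\ref{tridiagonal}.
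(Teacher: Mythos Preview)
The paper does not prove this lemma: it is imported wholesale from Dumitriu--Edelman \cite{DE} (their Lemmas~2.7, 2.9 and~2.11) and used as a black box in the change-of-variables chain.  So there is no proof in the paper to compare your proposal against.

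That said, your sketches are essentially the standard arguments.  Part~(i) is exactly the triangular-Jacobian computation, and your part~(ii) via the orthogonal-polynomial route (eigenvector matrix $U$ has $|\det U|=1$, factor out the $q_i$ and the leading coefficients, recognise the Vandermonde) is the classical argument; your remark that the reverse indexing $b=(b_{n-1},\ldots,b_1)$ reconciles the exponent $b_j^{n-j}$ you obtain with the $b_i^i$ in the statement is correct.  For part~(iii), first-order perturbation theory will in principle give the answer, but the resulting $(2n-1)\times(2n-1)$ determinant does not collapse as mechanically as you suggest---the entries $\partial q_i/\partial a_k$ and $\partial q_i/\partial b_k$ involve sums over all other eigenvectors with energy denominators, and ``tridiagonal structure zeroes out many entries'' is not quite true here.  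The clean proof in \cite{DE} instead reuses the orthogonal-polynomial machinery you already set up for~(ii): it expresses the recurrence coefficients $(a_k,b_k)$ as explicit functions of the spectral data $(\lambda_i,q_i)$ and differentiates, so that (ii) and (iii) are proved together rather than by unrelated methods.  If you want a self-contained proof, this is the route to take.
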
 

\noindent Written in terms of our $\Bx, \By, \Ba,$ and $\Bb$ with $Q_n= \prod_{i=1}^n q_i$ we get
 \begin{align*}
   J_{\phi} & =  \frac{1}{Q_n} \sin(\alpha_n) \cos (\alpha_n) \cos (\theta_{n-1}) \prod_{k=1}^{n-2}\sin(\alpha_{k+1})\cos(\alpha_{k+1})\sin(\theta_{k+1})\cos(\theta_k) \\
   J_{\psi} & =  2^n \cos(\alpha_1)\sin(\theta_1)\cos^2(\alpha_n) \prod_{k=2}^{n-1}\cos^2(\alpha_k) \sin^2(\theta_k).
 \end{align*}

\noindent The remainder of the proof of Theorem \ref{tridiagonal}, is to show that the Jacobian of the transformation gives the desired result.
\begin{align*}
 d(\Blambda, \Bq) &=   \frac{J_{\phi}}{J_{\psi} \times J_T} \, d(\Balpha, \Btheta)\\
 &= \tilde Z_{\beta,n} \frac{2^{-n}}{ Q^{1-\beta}_n} \left( \frac{1}{Q_n} \prod_{k=1}^n \cos^{k-1}\alpha_k \sin^{k-1}\alpha_k \prod_{k-1}^{n-1} \cos^k \theta_k \sin^{k-1}\theta_k\right)^\beta\\
		& \hspace{2cm} \times  \quad \prod_{k=1}^n \cos^{2a}\alpha_k \sin^{2b} \alpha_k \times \prod_{k=1}^{n-1} \sin^{2a+2b}\theta_k. 
\end{align*}

\noindent We substitute in the following pieces:  first, notice that

\[(\det  M_{n,\beta})^2 = \prod_{i=1}^n \lambda_i = \prod_{k=1}^n \cos^2 \alpha_k \cdot \prod_{k=1}^{n-1} \sin^2 \theta_k.\]

\noindent Then, applying Lemma \ref{doubling}, the singular values of $M_{n,\beta}$ are the squares of the eigenvalues of the symmetric tridiagonal matrix $L$ with zeroes in the main diagonal and 
\[
C_n,\,  S_n \tilde C_{n-1},\,  C_{n-1} \tilde S_{n-1},\,  \dots,\, S_2 \tilde C_1,\, C_1 \tilde S_1,
\]
in the off-diagonal. We can check that $L+I$ can be written as $AA^{T}$ where $A$ is the  bidiagonal matrix with 
\[
1, S_n, \tilde S_{n-1}, S_{n-1}, \dots, S_1, \ \ \ \text{ and } \ \ \ 
C_n, \tilde C_{n-1}, C_{n-1}, \tilde C_{n-2}, \dots, C_1
\]
in the diagonal and below the diagonal respectively.  Using the characterization from Lemma \ref{doubling}, we find that 
\[
\det (L+I) =   \prod_{{}^{k=1,\cdots,n}_{\,\, \epsilon=+,-}} 1 + \epsilon\sqrt{\lambda_k}  = \prod^n_{k=1} (1-\lambda_k) = (\det A)^2 = \prod_{k=1}^n \sin^2 \alpha_k \prod_{k=1}^{n-1} \sin^2 \theta_k.
\]

\begin{remark}
At this point one could again apply Lemma \ref{doubling} to $A$ to find a $4n \times 4n$ matrix with zeros in the diagonal and
\[ 1, C_n , S_n, \tilde C_{n-1}, \tilde S_{n-1}, C_{n-1},...,C_1,S_1\]
above and below the diagonal.  This matrix will have eigenvalue $\pm \sqrt{1\pm \sqrt{\lambda_k}}$.
\end{remark}

Lastly recall (\cite{DE}, Lemma $2.7$) that we have 
\[\Delta(\lambda)= \frac{1}{Q_n} \prod_{k=1}^{n-1} b_k^k= \frac{1}{Q_n}\prod_{k=1}^n \sin^{k-1}\alpha_k \cos^{k-1}\alpha_k \cdot \prod_{k=1}^{n-1} \cos^k\theta_k \cdot \prod_{k=1}^{n-1} \sin^{k-1}\theta_k.\]
Making all the appropriate substitutions this gives us that
\begin{eqnarray*}
\frac{J_{\phi}}{J_{\psi} \times J_T} \, d(\Balpha, \Btheta) & = & \tilde Z_{\beta,n}  \frac{2^{-n}}{ Q^{1-\beta}_n} \prod_{i<j}|\lambda_i- \lambda_j|^\beta \cdot  \prod_{k=1}^n \lambda_k^a \cdot \prod_{k=1}^n (1-\lambda_k)^b,
\end{eqnarray*}
From this we can see that the joint density function of $\Bq$ and $\Blambda$ separate.  As in the Hermite and Laguerre cases we have that $\Bq \sim ( \chi_\beta,...,\chi_\beta)$ normalized to unit length \cite{DE}.  This give us that for the unordered eigenvalues
\[f_{\beta,n,n_1,n_2}(\Blambda)= \frac{\tilde Z_{\beta,n}}{2^n n!}\frac{\left[\Gamma(\frac{\beta}{2})\right]^n}{\Gamma\left( \frac{\beta n}{2}\right)}\prod_{i<j}|\lambda_i- \lambda_j|^\beta \cdot  \prod_{k=1}^n \lambda_k^a \cdot \prod_{k=1}^n (1-\lambda_k)^b.\]

\noindent This completes the proof of Theorem \ref{tridiagonal}.
\hfill $\square$

\begin{remark}
The normalizing constant on the final density can be written as
\[ Z_{\beta,n} = \left[ \Gamma\left(\frac{\beta}{2}\right) \right]^n \prod_{k=1}^n \frac{\Gamma \left( \frac{\beta}{2} (n_1+n_2-n+k)\right)}{\Gamma\left( \frac{\beta}{2}(n_1-n+k)\right)\Gamma\left( \frac{\beta}{2}(n_2-n+k)\right)\Gamma\left( \frac{\beta}{2}k\right)}.\]
This gives an alternate derivation for the Selberg integral.
\end{remark}


\end{document}